\newtheorem{theorem}{Theorem}[section]
\newtheorem{proposition}[theorem]{Proposition}
\newtheorem{lemma}[theorem]{Lemma}
\newtheorem{corollary}[theorem]{Corollary}
\theoremstyle{definition}
\newtheorem{definition}[theorem]{Definition}
\title{Characterizations of $G$-ANR spaces and inverse limits}
\author[Antonyan]{Sergey A. Antonyan}
\address{Departamento de Matem\'aticas, Facultad de Ciencias, 
Universidad Nacional Aut\'onoma de M\'exico, 04510, Ciudad de M\'exico, M\'exico}
\email{antonyan@unam.mx}
\author[Kant\'un-Montiel]{Aura Lucina Kant\'un-Montiel}
\address{Instituto de Agroingenier\'ia, Universidad del Papaloapan, Av. Ferrocarril S/{N}, Ciudad  Universitaria, 
CP 68400, Loma Bonita, Oaxaca, M\'exico.}
\email{alkantun@unpa.edu.mx}
\author[Mata-Cano]{Jes\'us Eduardo Mata-Cano}
\address{Facultad de Ciencias Exactas, Universidad Ju\'arez del Estado de Durango, Lic. H\'ector Garc\'ia Calder\'on 210, CP 34113, SARH, Durango,
 M\'exico.}
 \email{eduardo.mata@ujed.mx}
\author[Mata-Romero]{Armando Mata-Romero}
\address{Facultad de Ciencias Exactas, Universidad Ju\'arez del Estado de Durango, Lic. H\'ector Garc\'ia Calder\'on 210, CP 34113, SARH, Durango M\'exico.}
\email{armandomr@ujed.mx}
\subjclass[2020]{54C55, 55M15, 55P10, 55P91, 54H15}
\keywords{$G$-ANR, Fine $G$-homotopy equivalence, Inverse limit, $G$-homotopy dense subset}
\thanks{This paper was partially supported by grant              IN-107426 of PAPIIT-DGAPA (UNAM)}
\begin{document}

\begin{abstract}
    In this paper we prove that, for a compact group $G$, a metrizable $G$-space is a $G$-ANR under the following asumptions: 
(1) if it dominates a $G$-ANR space through a fine $G$-homotopy equivalence;
(2) if it is $G$-homotopy dense in a $G$-ANR;
(3) if it contains a $G$-ANR as a $G$-homotopy dense subset;
(4) if it is the inverse limit of an inverse sequence of $G$-ANR spaces with bonding maps that are fine $G$-homotopy equivalences.
\end{abstract}

\maketitle

\section{Introduction}

This paper examines the equivariant topology of metrizable $G$-spaces  
(i.e., metrizable spaces equipped with a continuous action of a compact topological group $G$) from the perspective of the equivariant theory of retracts. 

We introduce and analyze equivariant versions of such concepts  as homotopy dense subsets (see \cite{TRZ} and \cite{Taras}), fine homotopy equivalences and $h$-refinements. 
Our approach is build on classical results in the theory of retracts, extending them to equivariant settings and providing new insights into the behavior of $G$-ANR spaces under fine $G$-homotopy equivalences.

The main results of this paper establish sufficient conditions under which metrizable $G$-spaces inherit the $G$-ANR property. 
Theorem \ref{Gkozlowski} proves that if a metrizable $G$-space dominates a $G$-ANR space through a fine $G$-homotopy equivalence then it is itself a $G$-ANR. 
Furthermore, Theorem \ref{gdenso-homofina} characterizes  $G$-homotopy dense subsets, showing their equivalence to subsets whose inclusion map is a fine $G$-homotopy equivalence. 
Another significant contribution  is Theorem \ref{GCurtis}, which  extends Curtis’s theorem \cite{Curtis71} to the equivariant setting;  
this result establishes that  the  limit of an inverse sequence of completely metrizable $G$-ANR spaces, where the bonding maps are fine $G$-homotopy equivalences, is itself a $G$-ANR.  
Additionally, if the spaces in the inverse sequence are $G$-ARs, then the inverse limit also inherits the $G$-AR property. 
These findings not only generalize classical theorems, but also provide a framework for understanding the structure of inverse limits in the category of $G$-spaces.

The fundamental concepts and results concerning the theory of $G$-spaces are drawn from \cite{Bredon72} and \cite{Palais60}. For the equivariant theory of retracts the principal references include \cite{Antonyan1987},  \cite{Antonyan1988}, \cite{Antonyan2001}  and  \cite{Antonyan2005}. 

\medskip

\section{Basic notations}

Throughout this paper, we  denote by $G$ a compact Hausdorff topological group, unless otherwise specified. The identity element of $G$ will be denoted by $e$.
All topological spaces are assumed to be completely regular and Hausdorff. 
All maps are assumed to be continuous.

An action of $G$ on a space $X$ is a map $G\times X\to X$, $(g,x)\mapsto g x$,  such that $ex=x$ for all $x\in X$, and $h(gx)=(hg)x$ for all $g,h\in G$, $x\in X$. By a $G$-{\it space} we mean a topological space $X$ equipped with a  continuous action of $G$.

Let $X$ be a $G$-space and let $x \in X$. 
The $G$-space $G(x)= \{gx \ | \ g\in G \}$ is called the {\it G-orbit} of $x$. 
The set of all the $G$-orbits of $X$  is denoted by $X/G$.    
A subset $S \subset X$,  is called {\it $G$-invariant} if $ S = G( S) = \{gs\mid g\in G, s\in S\}$.

Let  $X$ and $Y$ be $G$-spaces. A continuous map $f:X\rightarrow Y$
  is called a $G$-{\it map} or an {\it equivariant map}, if  $f(gx) = gf(x)$
  for every $(g,x) \in G\times X$. 
  If $G$ acts trivially on $Y$, we refer to $f$ as an {\it invariant map}.

A homotopy $F:X\times I\rightarrow Y $, where $I = [0,1]$,
  is called a $G$-{\it homotopy} if it is a $G$-map with  $X\times I$ 
 carrying the diagonal action $g(x,t) = (gx,t)$.
 For each $t\in I$, we  denote by $F_t$ the induced $G$-map 
$F_t:{X}\rightarrow  {Y}$ given by $F_t(x)=F(x,t)$.
 Two $G$-maps $f_{0},f_{1}:X\to Y$ are $G$-homotopic if there exists a $G$-homotopy $F:X\times I\to Y$ such that $F_0=f_{0}$ and $F_1=f_{1}$. In this case, we write $f_{0}\underset{G}{\simeq}   f_{1}$. 

\medskip

Let $G$-$\mathcal{M}$ denote the class of all metrizable $G$-spaces. It is well known that every space $X\in G$-$\mathcal{M}$ admits a compatible invariant metric, that is, a  metric $\rho:X\times X\to \mathbb{R}$ such that $\rho(gx,gy)=\rho(x,y)$ for all $g\in G$ and $x,y\in X$ (see \cite[Proposition 1.1.12]{Palais60}).

\smallskip

A  metrizable $G$-space  $Y$ is called a $G$-equivariant absolute neighborhood retract (for
the class $G$-$\mathcal M$), provided that  for  any closed $G$-embedding
$Y\hookrightarrow X$ in a    metrizable $G$-space $X$, there exists a $G$-retraction $r\colon
U\to Y$, where $U$ is an invariant neighborhood  of $Y$ in $X$ (notation: $Y\in G$-ANR). If, in addition, one can always take
$U=X$, then we say that $Y$ is a $G$-equivariant absolute retract  (notation: $Y\in G$-AR).

A  $G$-space  $Y$   is called a $G$-equivariant absolute   neighborhood extensor  for the class $G$-$\mathcal M$ (notation: $Y\in G$-ANE)  if,  for any closed invariant subset $A$ of a metrizable $G$-space $X$ and  any  $G$-map $f:A\to Y$,  there exist an invariant neighborhood $U$ of $A$ in $X$ and  a $G$-map $\psi\colon U\to Y$  that  extends $f$. If, in addition, one  can always take $U=X$, then we say that $Y$ is a $G$-equivariant  absolute  extensor  for $G$-$\mathcal M$ (notation: $Y\in G$-AE). The map $\psi$ is called a $G$-extension of $f$.

We note (see \cite{Antonyan1987}) that a metrizable $G$-space is a $G$-ANR (respectively, a $G$-AR) if and only if
it is a  $G$-ANE  (respectively, a $G$-AE).

\medskip

\section{$G$-$\mathcal{U}$-homotopies}

Our interest lies in homotopic properties controlled by a certain degree of proximity. 
In what follows, we present the concepts and results concerning the relationship between $G$-ANRs and $G$-homotopies controlled by open covers.

\medskip
Let $\mathcal{U}$ and $\mathcal{V}$ be two open coverings of a space $X$.
We say that $\mathcal{U}$ is a refinement of $\mathcal{V}$ if for each $U\in \mathcal{U}$ there is some $V\in \mathcal{V}$ such that $U\subset V$.

For a subset $A\subset X$ we denote the {\it star} of $A$ with respect to $\mathcal{U}$ by
$$\text{St}(A,\mathcal{U})=\bigcup\left\{U\in \mathcal{U} \mid \ U\cap A\neq \emptyset\right\}.$$
An open cover $\mathcal{U}$ of $X$ is a star-refinement of  $\mathcal{V}$ if 
$$\text{St}(\mathcal{U})=\{\text{St}(U,\mathcal{U})\mid U\in \mathcal{U}\}$$ forms a refinement of $\mathcal{V}$.

Observe that for any two open covers $\mathcal{U}$ and $\mathcal{V}$ of a $G$-space $X$ there always exists a common refinement, which can be obtained by taking the cover $\mathcal{W}=\{U\cap V\mid U\in\mathcal{U}, \; V\in\mathcal{V}\}$.

\medskip 

Let $X$ and $Y$ be topological spaces and $\mathcal{U}$ an open cover of $Y$.
Two maps $f,g:X\to Y$ are said to be \textit{$\mathcal{U}$-close} if, for every $x\in X$, there exists $U\in \mathcal{U}$ such that $f(x),g(x)\in U$.

  \begin{lemma}\label{remmark:Kozlowski-1}
        Let $f:X\to Y$ be a $G$-map such that $f(X)$ is dense in $Y$.
        Let $\mathcal{U}$ be an open cover of $Y$ and $\varphi: Y\to X$ be a $G$-map such that $\varphi f$ is $f^{-1}(\mathcal{U})$-close to the identity map $id_{X}$.
        Then $f\varphi$ is $\text{St}(\mathcal{U})$-close to $id_{Y}$.
    \end{lemma}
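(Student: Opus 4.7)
The plan is to fix an arbitrary point $y\in Y$ and, by combining the continuity of $f\varphi$ with the density of $f(X)$ in $Y$, to produce a point $x\in X$ whose image $f(x)$ lies simultaneously near $y$ and whose further image $f\varphi f(x)$ lies near $f\varphi(y)$; then the hypothesis on $\varphi f$ supplies the common star that witnesses $\mathrm{St}(\mathcal U)$-closeness. The equivariance plays no role beyond making all the constructed maps $G$-maps, so I would not mention it in the proof.

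Concretely, I would proceed as follows. First, pick $U_1\in\mathcal U$ with $y\in U_1$ and $U_2\in\mathcal U$ with $f\varphi(y)\in U_2$. The set
\[
W \;=\; U_1 \cap \varphi^{-1}\!\bigl(f^{-1}(U_2)\bigr)
\]
is an open neighborhood of $y$ in $Y$, so by the density hypothesis there exists $x\in X$ with $f(x)\in W$. This gives two pieces of information: $f(x)\in U_1$ and $f\varphi f(x)\in U_2$. Now I would apply the standing hypothesis that $\varphi f$ is $f^{-1}(\mathcal U)$-close to $\mathrm{id}_X$: there is some $U_3\in\mathcal U$ such that $x$ and $\varphi f(x)$ both belong to $f^{-1}(U_3)$, i.e.\ $f(x)\in U_3$ and $f\varphi f(x)\in U_3$.

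Finally, I would assemble the three sets $U_1,U_2,U_3$ into a single star. The point $f(x)$ lies in $U_1\cap U_3$, so $U_1\subset\mathrm{St}(U_3,\mathcal U)$; likewise the point $f\varphi f(x)$ lies in $U_2\cap U_3$, so $U_2\subset\mathrm{St}(U_3,\mathcal U)$. Consequently
\[
y\in U_1\subset \mathrm{St}(U_3,\mathcal U)
\qquad\text{and}\qquad
f\varphi(y)\in U_2\subset \mathrm{St}(U_3,\mathcal U),
\]
which shows that $f\varphi$ and $\mathrm{id}_Y$ are $\mathrm{St}(\mathcal U)$-close at $y$. Since $y$ was arbitrary, this finishes the argument.

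There is really no hard step: the content of the lemma is a bookkeeping exercise in how stars are built from pairwise intersections. The only subtlety is the choice of the test neighborhood $W$, which must be small enough that density produces a single $x$ carrying the two unrelated constraints (closeness to $y$ on the one side and having $f\varphi f(x)$ close to $f\varphi(y)$ on the other). Once that $x$ is found, the chain $U_1\cap U_3\ni f(x)$, $U_2\cap U_3\ni f\varphi f(x)$ does all the work automatically.
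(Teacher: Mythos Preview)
Your proof is correct and essentially identical to the paper's own argument: both choose $U_1\ni y$, $U_2\ni f\varphi(y)$, use density of $f(X)$ to find $x$ with $f(x)\in U_1\cap (f\varphi)^{-1}(U_2)$ (your $W$ is precisely this set since $\varphi^{-1}f^{-1}=(f\varphi)^{-1}$), then invoke the closeness hypothesis to get $U_3$ containing both $f(x)$ and $f\varphi f(x)$, and conclude via $U_1\cap U_3\neq\emptyset$, $U_2\cap U_3\neq\emptyset$. The only difference is expository.
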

    \begin{proof}
        Let $y\in Y$. Let $U_{1}, U_{2}\in \mathcal{U}$ such that $y\in U_{1}$ and $f\varphi(y)\in U_{2}$. Then  $y\in U_{1}\cap (f\varphi)^{-1}(U_{2})$.  
        Since  $f(X)$ is dense in $Y$, there exists $x\in X$ such that $f(x)\in U_{1}\cap (f\varphi)^{-1}(U_{2})$, implying that $f\varphi f(x)\in U_2$.          
        Moreover, since  $\varphi f$ is $f^{-1}(\mathcal{U})$-close to $id_{X}$, there exists $U_{3}\in \mathcal{U}$ such that $x,\varphi f(x)\in f^{-1}(U_{3})$, i.e., $f(x),f\varphi f(x)\in U_{3}$. 
        Thus, $U_{1}\cap U_{3}\neq \emptyset$ and $U_{2}\cap U_{3}\neq \emptyset$. 
        Therefore $y,f\varphi(y)\in \text{St}(U_{3},\mathcal{U})$, as required.
    \end{proof}

\medskip 
We say that an open cover $\mathcal{U}$ of a $G$-space $X$ is a {\it $G$-cover} if  $gU \in \mathcal{U}$ for every $U \in \mathcal{U}$ and $g \in G$.

\medskip

In  paracompact $G$-spaces, there are arbitrary small $G$-covers, namely, we have the following proposition.

 \begin{proposition}[{\cite[Lemma 4.5]{Antonyan2005}}]\label{gcubierta}
Let $X$ be a paracompact $G$-space. Then for every open cover $\mathcal{U}$ of $X$, there exists a $G$-cover $\mathcal{V}$  such that  $\mathcal{V}$ is a star-refinement of $\mathcal{U}$.
 \end{proposition}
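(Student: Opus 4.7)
The plan is to combine Stone's theorem (every open cover of a paracompact Hausdorff space admits an open star-refinement) with the compactness of $G$ via a standard tube-type neighborhood argument.

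First I would fix, using paracompactness of $X$, an open star-refinement $\mathcal{U}'$ of $\mathcal{U}$, so that for every $U'\in\mathcal{U}'$ there is some $U\in\mathcal{U}$ with $\text{St}(U',\mathcal{U}')\subseteq U$. Next, for every $x\in X$ I would construct an open neighborhood $W_x$ of $x$ with the stronger property that \emph{each} $G$-translate $gW_x$ is contained in some member of $\mathcal{U}'$. To produce $W_x$, for each $g\in G$ pick $U'_g\in\mathcal{U}'$ containing $gx$; joint continuity of the action at $(g,x)$ yields open sets $A_g\ni g$ in $G$ and $B_g\ni x$ in $X$ with $A_g B_g\subseteq U'_g$; compactness of $G$ then provides a finite subcover $A_{g_1},\ldots,A_{g_n}$ of $G$, and $W_x:=B_{g_1}\cap\cdots\cap B_{g_n}$ satisfies the requirement because any $g\in G$ lies in some $A_{g_i}$, whence $gW_x\subseteq A_{g_i}B_{g_i}\subseteq U'_{g_i}$.

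I would then set $\mathcal{V}:=\{\,gW_x:g\in G,\ x\in X\,\}$, which is visibly an open $G$-cover of $X$: the $W_x$'s already cover $X$, and $\mathcal{V}$ is closed under the $G$-action on subsets since $h\cdot gW_x=(hg)W_x\in\mathcal{V}$. To verify the star-refinement condition, suppose $gW_x\cap hW_y\neq\emptyset$ for two elements of $\mathcal{V}$; by construction $gW_x\subseteq U'_1$ and $hW_y\subseteq U'_2$ for some $U'_1,U'_2\in\mathcal{U}'$, and these two members of $\mathcal{U}'$ must themselves meet, so $U'_2\subseteq\text{St}(U'_1,\mathcal{U}')\subseteq U$ for a single $U\in\mathcal{U}$. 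Taking the union over all such $hW_y$ yields $\text{St}(gW_x,\mathcal{V})\subseteq U$, which is the required condition.

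The main obstacle is the second step, producing a uniform neighborhood $W_x$ whose every $G$-translate fits inside a single member of $\mathcal{U}'$. Once compactness of $G$ is exploited to absorb this uniformity in $g$, the star-refinement estimate is the elementary observation that whenever two translates of small sets intersect, their covering $\mathcal{U}'$-sets intersect too, which is exactly what the star-refinement of $\mathcal{U}'$ over $\mathcal{U}$ is designed to control.
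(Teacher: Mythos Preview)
Your argument is correct. The tube-lemma step using compactness of $G$ to produce $W_x$ with every translate $gW_x$ contained in a member of $\mathcal{U}'$ is exactly the right idea, and the star-refinement verification goes through as you wrote it.

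Note, however, that the paper does not give its own proof of this proposition: it is quoted verbatim from \cite[Lemma~4.5]{Antonyan2005} and used as a black box. So there is no proof in the present paper to compare against. Your argument is a standard and valid way to establish the result; it is essentially the one that appears in the cited reference (paracompactness for the star-refinement, compactness of $G$ for the uniform smallness of translates).
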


Let $\mathcal{U}$ be an open cover of a $G$-space $Y$. 
A $G$-homotopy $F:X\times I\to Y$ is said to be {\it limited by}  $ \mathcal{U} $, or simply a {\it $G$-$\mathcal{U}$-homotopy}, if for every $x\in X$, 
there exists $U\in \mathcal{U}$ such that $F_t(x)\in U$ for all $t\in I$.
In this  case,  we say that $F_0$ and $F_1$ are $G$-$\mathcal U$-homotopic maps, 
and we write $F_0\underset{G\text{-}\mathcal{U}}{\simeq} F_1$. 

It is clear that, if $\mathcal{V}$ is an open cover of a $G$-space $Y$ such that $\mathcal{U}$ is a refinement of $\mathcal{V}$, then every  $G$-$\mathcal{U}$-homotopy $F:X\times I\to Y$ is also a $G$-$\mathcal{V}$-homotopy.

Let $\mathcal{U}$ be an open cover of $X$.
We  say that a $G$-space $X$ is $G$-$\mathcal{U}$-dominated by a $G$-space $Y$ if there exist $G$-maps $f:X\to Y$ and $\varphi:Y\to X$ such that $\varphi f$ is $G$-$\mathcal{U}$-homotopic to the identity map $id_{X}$.

\medskip 
As in the non-equivariant case, $G$-ANR spaces have the $G$-homotopy extension property. 
Moreover, they also possess the equivariant extension property for $G$-$\mathcal{U}$-homotopies, as  presented in \cite{Ale2007}.

\begin{theorem}[{\cite[Theorem 5.1]{Ale2007}}]\label{Extension_homotopia}
Let $Y$ be a $G$-\textup{ANR} and let $\mathcal{U}$ be a $G$-cover of $Y$. 
Suppose $A$ is a closed invariant subset of a metrizable $G$-space $X$ and let $H_{t}:A\to Y$, $t\in [0,1]$, be a $G\text{-}\mathcal{U}$-homotopy. 
If $H_{0}$ can be extended to a $G$-map $f:X\to Y$, then there exists a $G\text{-}\mathcal{U}$-homotopy $\widetilde{H}_{t}:X\to Y$ such that $\widetilde{H}_{0}=f$ and $\widetilde{H}_{t}|_{A}=H_{t}$ for all $t\in [0,1]$.
\end{theorem}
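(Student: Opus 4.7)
The plan is to combine $f$ and $H$ into one equivariant map on the ``mapping cylinder'' $B=(X\times\{0\})\cup(A\times I)$, extend it to a neighborhood using the $G$-ANE property, and then rescale the neighborhood extension in time by an equivariant cut-off function so that the resulting homotopy is both defined on all of $X\times I$ and traces out $\mathcal U$-small arcs. The set $B$ is closed and $G$-invariant in $X\times I$ (diagonal action), and $F\colon B\to Y$ defined by $F(x,0)=f(x)$ and $F(a,t)=H_t(a)$ is a well-defined $G$-map since $H_0=f|_A$. Because the classes $G$-ANR and $G$-ANE coincide for metrizable $G$-spaces, $F$ extends to a $G$-map $\widetilde F\colon W\to Y$ on an open $G$-invariant neighborhood $W$ of $B$ in $X\times I$.

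Next I would cover $X$ by open sets on which $\widetilde F$ is $\mathcal U$-controlled, handling $A$ and its complement separately. For each $x\in X\setminus A$, pick $U_x\in\mathcal U$ with $f(x)\in U_x$ and use continuity of $\widetilde F$ at $(x,0)$ to find an open $N_x\subset X\setminus A$ and a radius $\delta_x\in(0,1)$ with $N_x\times[0,\delta_x]\subset W\cap\widetilde F^{-1}(U_x)$. For each $a\in A$, the compact slice $\{a\}\times I\subset W$ maps under $\widetilde F$ to $H(\{a\}\times I)\subset U_a\in\mathcal U$; the tube lemma applied to the open set $\widetilde F^{-1}(U_a)$ yields an open $N_a\ni a$ with $N_a\times I\subset W\cap\widetilde F^{-1}(U_a)$, and I set $\delta_a=1$. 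Replacing each neighborhood by its $G$-saturation produces a $G$-invariant open cover $\{\Omega_\alpha\}$ of $X$ in which every member meeting $A$ carries $\delta_\alpha=1$; the $\mathcal U$-control persists under saturation because $\mathcal U$ is itself a $G$-cover, so for every $y=gn\in G\cdot N_x$ one has $\widetilde F(\{y\}\times[0,\delta_x])\subset gU_x\in\mathcal U$.

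Finally I would assemble $\varphi\colon X\to I$ using a $G$-invariant, locally finite partition of unity $\{\lambda_\alpha\}$ subordinate to $\{\Omega_\alpha\}$, available because $X$ is a paracompact $G$-space with $G$ compact. Setting $\varphi(x)=\sum_\alpha\lambda_\alpha(x)\,\delta_\alpha$ gives a continuous $G$-invariant function with $\varphi|_A\equiv 1$ (only indices with $\delta_\alpha=1$ contribute on $A$, since every $N_x$ with $x\notin A$ was chosen to miss $A$) and with $\varphi(x)$ bounded above by some active $\delta_\alpha$; fixing such a maximizing index shows that $\widetilde F(\{x\}\times[0,\varphi(x)])$ lies in one element of $\mathcal U$ and that the subgraph stays inside $W$. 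Define $\widetilde H(x,t)=\widetilde F(x,\varphi(x)t)$. Invariance of $\varphi$ and equivariance of $\widetilde F$ make $\widetilde H$ a $G$-homotopy; $\widetilde H_0=f$ because $\widetilde F(\cdot,0)=f$; $\widetilde H_t|_A=H_t$ because $\varphi\equiv 1$ on $A$; and the arc condition delivers the $\mathcal U$-limitation.

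The main technical hurdle is producing $\varphi$ with all five properties (continuity, $G$-invariance, value $1$ on $A$, graph inside $W$, and the $\mathcal U$-size bound) at once; the key device is the preliminary trimming of each $N_x$ for $x\notin A$ to $N_x\setminus A$, so that after saturation the indices meeting $A$ are exactly those with $\delta_\alpha=1$, which is what makes the partition-of-unity convex combination automatically equal to $1$ on $A$ while still respecting the $\mathcal U$-bound elsewhere.
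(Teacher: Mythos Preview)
The paper does not supply its own proof of this statement; it is quoted verbatim from \cite[Theorem~5.1]{Ale2007} and used as a black box. So there is nothing in the present paper to compare against. Your argument is the standard Borsuk-type homotopy extension proof, carried out equivariantly with $\mathcal U$-control, and it is essentially correct.

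Two small technical points are worth tightening. First, the cover $\{\Omega_\alpha\}$ you build is indexed by the points of $X$ and is certainly not locally finite, so when you speak of a ``$G$-invariant, locally finite partition of unity subordinate to $\{\Omega_\alpha\}$'' you should say explicitly how it is obtained: push the invariant cover down to the paracompact orbit space $X/G$, take a locally finite partition of unity there subordinate to the image cover, and pull back along the orbit map. The resulting functions are automatically $G$-invariant, and each has support contained in some $\Omega_{\alpha(\beta)}$; you then set $\delta_\beta=\delta_{\alpha(\beta)}$ and the convex-combination formula $\varphi=\sum_\beta\lambda_\beta\delta_\beta$ goes through unchanged. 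Second, the sentence ``$\varphi(x)$ bounded above by some active $\delta_\alpha$'' deserves one more line: since $\varphi(x)$ is a convex combination of the finitely many $\delta_\beta$ with $\lambda_\beta(x)>0$, it is at most their maximum, and the maximizing index gives the $\Omega_{\alpha_0}$ you then use. With those clarifications your proof is complete; the equivariance checks (saturation of $N_x$ preserving $\mathcal U$-control because $\mathcal U$ is a $G$-cover, and $G\cdot N_x\cap A=\emptyset$ because $A$ is invariant and $N_x\subset X\setminus A$) are handled correctly.
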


Let $X$ and $Y$ be $G$-spaces, and let $\mathcal{U}$ be an open cover of $Y$. 
A $G$-map $f:X\to Y$ is called a \textit{$G$-$\mathcal{U}$-homotopy equivalence}, if there exists a $G$-map $\varphi:Y\to X$ such that $\varphi f\underset{G\text{-}f^{-1}(\mathcal{U})}{\simeq} id_{X}$ and $f\varphi\underset{G\text{-}\mathcal{U}}{\simeq} id_{Y}$. 
In this case,   $\varphi$ is called a \textit{$G$-$\mathcal{U}$-homotopy inverse} of $f$.

 Let $X$ and $Y$ be $G$-spaces. We say that a $G$-map $f:X\to Y$ is a \textit{fine $G$-homotopy equivalence}, if $f$ is a $G$-$\mathcal{U}$-homotopy equivalence for every open cover $\mathcal{U}$ of $Y$.   

\begin{theorem}[{\cite[Theorem 7]{Antonyan1988}}]\label{Teorema7}
Let $Y$ be a metrizable $G$-space. 
If for any open cover $\mathcal{U}$ of $Y$, there exists a $G$-\textup{ANR} space $X$ such that $Y$ is $G$-$\mathcal{U}$-homotopy dominated by $X$, then $Y$ is a $G$-\textup{ANR}.
\end{theorem}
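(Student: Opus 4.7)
The strategy is to reduce the theorem to an equivariant version of Hanner's classical characterization of \textup{ANR} spaces, and then to verify the hypothesis of that characterization directly from the domination assumption, using the $G$-\textup{ANE} property of the dominating $G$-\textup{ANR} as the bridge.

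First, I would invoke (or establish as an auxiliary lemma) the following equivariant Hanner-type criterion: \emph{a metrizable $G$-space $Y$ is a $G$-\textup{ANR} if and only if, for every closed invariant $G$-embedding of $Y$ into a metrizable $G$-space $Z$ and every open cover $\mathcal{U}$ of $Y$, there exist an invariant neighborhood $V$ of $Y$ in $Z$ and a $G$-map $r\colon V\to Y$ with $r|_Y\underset{G\text{-}\mathcal{U}}{\simeq}\mathrm{id}_Y$}. The nontrivial direction is the ``if'' part; a standard proof embeds $Y$ as a closed invariant subset of a convex set in a normed linear $G$-space (which is a $G$-\textup{AE}) and runs an iterative construction along a sequence of $G$-covers whose stars shrink to the diagonal, patching a sequence of approximate $G$-retractions into a genuine $G$-retraction by equivariant convex combinations. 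This is the step I expect to be the main technical obstacle.

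With the criterion in hand, the remainder is short. Fix an arbitrary closed invariant $G$-embedding $Y\hookrightarrow Z$ and an arbitrary open cover $\mathcal{U}$ of $Y$. By Proposition~\ref{gcubierta}, after replacing $\mathcal{U}$ by a $G$-cover that star-refines it, we may assume $\mathcal{U}$ is a $G$-cover. The hypothesis of the theorem now produces a $G$-\textup{ANR} space $X$ together with $G$-maps $f\colon Y\to X$ and $\varphi\colon X\to Y$ satisfying $\varphi f\underset{G\text{-}\mathcal{U}}{\simeq}\mathrm{id}_Y$. Because $X$ is a $G$-\textup{ANR}, hence a $G$-\textup{ANE}, the $G$-map $f$ admits an equivariant extension $\psi\colon V\to X$ to some invariant open neighborhood $V$ of $Y$ in $Z$. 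Setting $r:=\varphi\psi\colon V\to Y$ yields a $G$-map whose restriction to $Y$ equals $\varphi f$, and this is $G$-$\mathcal{U}$-homotopic to $\mathrm{id}_Y$ by construction. Since the embedding $Y\hookrightarrow Z$ and the cover $\mathcal{U}$ were arbitrary, the Hanner-type criterion applies and gives $Y\in G$-\textup{ANR}.

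In summary, the Hanner-type criterion is the deep ingredient; once it is available, the hypothesis of the theorem is tailored precisely to manufacture the required approximate $G$-retraction on $V$, the $G$-\textup{ANE} property of $X$ being what converts the abstract $G$-$\mathcal{U}$-domination into an honest equivariant extension beyond $Y$.
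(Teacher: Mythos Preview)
The paper does not contain a proof of this statement: Theorem~\ref{Teorema7} is quoted verbatim from \cite[Theorem~7]{Antonyan1988} and used as a black box (its only role in the paper is to yield Corollary~\ref{corTeorema7}). So there is nothing to compare your proposal against in this manuscript.

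That said, your plan is a faithful outline of how the result is actually established in the cited source. The reduction step you give is correct and routine: once the equivariant Hanner-type criterion is available, the $G$-\textup{ANE} property of the dominating $X$ immediately converts the abstract $G$-$\mathcal U$-domination into an approximate $G$-retraction $r=\varphi\psi$ on a neighborhood of $Y$. You have correctly located the real content in the ``if'' direction of the criterion, and your sketch for it (embed $Y$ closed-invariantly into a normed linear $G$-space, choose a sequence of $G$-covers with shrinking stars, and build a genuine $G$-retraction as a limit of equivariant convex combinations of approximate ones) is the standard route. One small remark: for the application you do not need the criterion for \emph{every} closed invariant embedding $Y\hookrightarrow Z$; a single embedding into a $G$-\textup{AR} suffices, and this is what the convex-combination argument actually uses. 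Stating it for all $Z$ is harmless but slightly obscures that only one ambient space is doing the work.
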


It follows from this theorem that the image under a fine $G$-homotopy equivalence of a $G$-ANR is also a $G$-ANR.

\begin{corollary}\label{corTeorema7}
    Let $f:X\to Y$ be a fine $G$-homotopy equivalence. 
    If $X$ is a  $G$-\textup{ANR}, then $Y$ is a $G$-\textup{ANR}.
\end{corollary}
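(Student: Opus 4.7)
The plan is simply to unpack the definitions and invoke Theorem \ref{Teorema7} directly. Since $X$ is already a $G$-ANR, it suffices to produce, for every open cover $\mathcal{U}$ of $Y$, a $G$-ANR that $G$-$\mathcal{U}$-homotopy dominates $Y$; the obvious candidate is $X$ itself, with domination witnessed by $f$ and a suitable $G$-$\mathcal{U}$-homotopy inverse.

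Concretely, I would fix an arbitrary open cover $\mathcal{U}$ of $Y$. Because $f:X\to Y$ is a fine $G$-homotopy equivalence, it is in particular a $G$-$\mathcal{U}$-homotopy equivalence, so there exists a $G$-map $\varphi:Y\to X$ with $\varphi f \underset{G\text{-}f^{-1}(\mathcal{U})}{\simeq} id_{X}$ and, crucially here, $f\varphi \underset{G\text{-}\mathcal{U}}{\simeq} id_{Y}$. The latter relation is exactly what is required for $Y$ to be $G$-$\mathcal{U}$-dominated by $X$ via the $G$-maps $\varphi:Y\to X$ and $f:X\to Y$. Since $\mathcal{U}$ was arbitrary and $X\in G$-ANR, Theorem \ref{Teorema7} applies and yields $Y\in G$-ANR.

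There is essentially no obstacle: the work of Theorem \ref{Teorema7} has already been done, and the only subtlety is keeping the directions of the $G$-maps straight when matching the definition of $G$-$\mathcal{U}$-domination. In particular, the first half of the fine $G$-homotopy equivalence condition (the relation $\varphi f \simeq_{G\text{-}f^{-1}(\mathcal{U})} id_X$) is not needed here; only the $\mathcal{U}$-control of $f\varphi$ on $Y$ plays a role.
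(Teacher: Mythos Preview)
Your proposal is correct and matches the paper's approach exactly: the paper itself introduces the corollary with the words ``It follows from this theorem that\ldots'' and gives no further argument, so the intended proof is precisely the unpacking of the definition of fine $G$-homotopy equivalence followed by a direct appeal to Theorem~\ref{Teorema7}, as you wrote. Your observation that only the relation $f\varphi\underset{G\text{-}\mathcal{U}}{\simeq} id_{Y}$ is needed (and not the companion relation on $X$) is also accurate.
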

  
  \medskip

\section{$G$-ANR spaces  and fine $G$-homotopy equivalences}

We now extend the result presented in Corollary \ref{corTeorema7} by proving, in Theorem \ref{Gkozlowski}, the equivariant analogue of Kozlowski's Theorem (\cite[Theorem 6.7.5]{Sakai2013}). 
An important step toward this goal is the fact that $G$-ANRs have the following key refinements.

\medskip 
Let $Y$ be a $G$-space and let $\mathcal{U}$ be an open $G$-cover of $Y$. 
An open $G$-cover $\mathcal{V}$ of $Y$ is called an {\it $h$-$G$-refinement} of $\mathcal{U}$,
if $\mathcal{V}$ is a refinement of $\mathcal{U}$, and any two $\mathcal{V}$-close $G$-maps $f, \kappa : X \to Y$ defined on a metrizable $G$-space $X$ are $G$-$\mathcal{U}$-homotopic.

\begin{theorem}[{\cite[Theorem 4.2]{Ale2007}}] \label{V-cercanas}
    Every open $G$-cover of a $G$-\textup{ANR} space has an $h$-$G$-refinement.
\end{theorem}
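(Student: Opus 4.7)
The plan is to embed $Y$ equivariantly into a linear $G$-space and use straight-line interpolation together with a $G$-retraction whose continuity supplies the $\mathcal{U}$-control needed for the $h$-$G$-refinement property. Since $G$ is compact and $Y$ is metrizable, $Y$ embeds as a closed $G$-invariant subset of a metrizable linear $G$-space $L$ carrying a $G$-invariant norm (a standard ingredient from the equivariant Arens-Eells theory). The $G$-ANR hypothesis then supplies an invariant open neighborhood $W$ of $Y$ in $L$ together with a $G$-retraction $r\colon W\to Y$.

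For each $y\in Y$ define
$$\delta_{0}(y)=\sup\bigl\{\epsilon>0 : B(y,\epsilon)\subset W\text{ and }r(B(y,\epsilon))\subset U\text{ for some }U\in\mathcal{U}\bigr\},$$
where $B(y,\epsilon)$ is the open ball in $L$. Since $\mathcal{U}$ covers $Y$ and $r$ is continuous with $r(y)=y$, $\delta_{0}(y)>0$. The $G$-invariance of the norm together with the fact that $\mathcal{U}$ is a $G$-cover give $\delta_{0}(gy)=\delta_{0}(y)$ for every $g\in G$, because $gB(y,\epsilon)=B(gy,\epsilon)$ and $r(B(gy,\epsilon))=gr(B(y,\epsilon))\subset gU\in\mathcal{U}$. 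Set $\delta(y)=\delta_{0}(y)/2$ (taking $\min\{\delta_{0}(y)/2,1\}$ if $\delta_{0}(y)=\infty$) and
$$\mathcal{V}=\bigl\{B(y,\delta(y))\cap Y : y\in Y\bigr\}.$$
This is an open $G$-invariant family covering $Y$, and since $\delta(y)<\delta_{0}(y)$ there exists $U_{y}\in\mathcal{U}$ with $r(B(y,\delta(y)))\subset U_{y}$; as $r$ restricts to the identity on $Y$, $\mathcal{V}$ refines $\mathcal{U}$.

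Now let $f,\kappa\colon X\to Y$ be $\mathcal{V}$-close $G$-maps from a metrizable $G$-space $X$. For each $x\in X$ pick $y_{x}\in Y$ with $f(x),\kappa(x)\in B(y_{x},\delta(y_{x}))$. Convexity of the open ball and its inclusion in $W$ ensure that the segment $[f(x),\kappa(x)]\subset W$, so the formula
$$H(x,t)=r\bigl((1-t)f(x)+t\kappa(x)\bigr)$$
defines a continuous $G$-map $H\colon X\times I\to Y$ (equivariance uses that $G$ acts by linear maps on $L$ and that $r$ is $G$-equivariant). Since $r$ fixes $Y$, $H_{0}=f$ and $H_{1}=\kappa$; and for each $x$ the whole path $t\mapsto H(x,t)$ lies in $r(B(y_{x},\delta(y_{x})))\subset U_{y_{x}}\in\mathcal{U}$, making $H$ a $G$-$\mathcal{U}$-homotopy from $f$ to $\kappa$.

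The main obstacle is the initial linear embedding of $Y$ with a $G$-invariant norm; once this is available, the interpolation argument runs essentially as in the classical (non-equivariant) case, and the only delicate point is keeping the radius function $\delta$ strictly $G$-invariant, which is handled by taking the supremum over admissible radii rather than by making orbit-wise choices.
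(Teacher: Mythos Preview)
The paper does not supply its own proof of this statement: Theorem~\ref{V-cercanas} is quoted verbatim from \cite[Theorem~4.2]{Ale2007} and is used as a black box, so there is nothing in the present paper to compare your argument against.

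That said, your argument is sound and follows the classical line one finds in the non-equivariant setting (e.g.\ Hu or Sakai): embed $Y$ as a closed invariant subset of a normed linear $G$-space, retract an invariant neighbourhood, and use the linear structure to interpolate. The equivariant Arens--Eells embedding you invoke is available for compact $G$ (this is precisely the content of \cite{Antonyan1987}), and the key observation that $\delta_{0}$ is $G$-invariant because the invariant norm makes $gB(y,\epsilon)=B(gy,\epsilon)$ is exactly what is needed to keep $\mathcal{V}$ a $G$-cover. One small point worth making explicit: your definition of $\delta_{0}$ involves a conjunction of two monotone-in-$\epsilon$ conditions, so the admissible $\epsilon$ form an interval and the choice $\delta(y)=\delta_{0}(y)/2$ indeed lands in it; you use this implicitly when you assert $B(y,\delta(y))\subset W$ and $r(B(y,\delta(y)))\subset U_{y}$. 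With that remark, the argument is complete.
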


 \begin{lemma}\label{remark:Kozlowski-2}
        Let $X\in G$-\textup{ANR}, $Y$ be a paracompact $G$-space and $f:X\to Y$ be a $G$-map. 
        Suppose that, for any open cover $\mathcal{U}$ of $Y$, there exists a $G$-map $\varphi:Y\to X$ such that 
        $\varphi f\underset{G\text{-}f^{-1}(\mathcal{U})}{\simeq}id_{X}$.
        Then, for every open $G$-cover $\mathcal{U}$ of $Y$ there exists an open $G$-cover $\mathcal{V}$ which is a refinement of $\mathcal{U}$, and $f^{-1}(\mathcal{V})$ is an $h$-$G$-refinement of $f^{-1}(\mathcal{U})$.
    \end{lemma}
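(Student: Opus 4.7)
The plan is to build $\mathcal{V}$ by combining two refinements---one that comes from the $G$-ANR property of $X$ and one that controls the backward map $\varphi$---and then to verify the $h$-$G$-refinement condition via the three-step chain $\kappa_1 \simeq_G \varphi f \kappa_1 \simeq_G \varphi f \kappa_2 \simeq_G \kappa_2$, letting a star-refinement absorb the concatenation.

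First, I would apply Proposition \ref{gcubierta} to pick an open $G$-cover $\mathcal{U}_1$ of $Y$ that star-refines $\mathcal{U}$. Feeding $\mathcal{U}_1$ into the hypothesis produces a $G$-map $\varphi : Y \to X$ and a $G$-$f^{-1}(\mathcal{U}_1)$-homotopy $H : X \times I \to X$ from $\varphi f$ to $id_X$. Since $f$ is equivariant and $\mathcal{U}_1$ is a $G$-cover, $f^{-1}(\mathcal{U}_1)$ is an open $G$-cover of $X$, so Theorem \ref{V-cercanas} yields an $h$-$G$-refinement $\mathcal{W}$ of $f^{-1}(\mathcal{U}_1)$ in $X$. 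I would then set
$$\mathcal{V} = \{U_1 \cap \varphi^{-1}(W) \mid U_1 \in \mathcal{U}_1,\ W \in \mathcal{W}\},$$
which is an open $G$-cover of $Y$ (equivariance of $\varphi$ makes $\varphi^{-1}(\mathcal{W})$ a $G$-cover) that refines $\mathcal{U}_1$ and hence $\mathcal{U}$.

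For the verification, take a metrizable $G$-space $Z$ and two $f^{-1}(\mathcal{V})$-close $G$-maps $\kappa_1, \kappa_2 : Z \to X$. For each $z$ pick $V_z = U_1^{(z)} \cap \varphi^{-1}(W_z) \in \mathcal{V}$ containing both $f\kappa_1(z)$ and $f\kappa_2(z)$; then $\varphi f \kappa_1(z),\ \varphi f \kappa_2(z) \in W_z$, so $\varphi f \kappa_1$ and $\varphi f \kappa_2$ are $\mathcal{W}$-close and therefore $G$-$f^{-1}(\mathcal{U}_1)$-homotopic. Composing $H$ with $\kappa_i \times id_I$ yields $G$-$f^{-1}(\mathcal{U}_1)$-homotopies $\kappa_i \simeq_G \varphi f \kappa_i$, and concatenating the three pieces produces a $G$-homotopy from $\kappa_1$ to $\kappa_2$.

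The main obstacle---really the only nontrivial bookkeeping---is checking that this concatenation is controlled by $f^{-1}(\mathcal{U})$, not merely by $f^{-1}(\mathcal{U}_1)$. At each $z$ the three tracks (pushed down to $Y$ by $f$) lie in three members $U_1^a, U_1^b, U_1^c \in \mathcal{U}_1$, and the bridge points $f\varphi f \kappa_1(z) \in U_1^a \cap U_1^b$ and $f\varphi f \kappa_2(z) \in U_1^b \cap U_1^c$ show that $U_1^a, U_1^b, U_1^c \subseteq \text{St}(U_1^b, \mathcal{U}_1)$; since $\mathcal{U}_1$ star-refines $\mathcal{U}$, this star lies in a single $U \in \mathcal{U}$, so the full concatenated track at $z$ sits inside $f^{-1}(U)$, as required.
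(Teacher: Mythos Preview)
Your proof is correct and follows essentially the same approach as the paper's: take a star-refinement $\mathcal{U}_1$ (the paper calls it $\mathcal{W}$), pull back via $\varphi$ an $h$-$G$-refinement of $f^{-1}(\mathcal{U}_1)$ obtained from Theorem~\ref{V-cercanas}, intersect to form $\mathcal{V}$, and verify the $h$-$G$-refinement condition via the three-link chain $\kappa_1\simeq\varphi f\kappa_1\simeq\varphi f\kappa_2\simeq\kappa_2$ with the star-refinement absorbing the concatenation. The only cosmetic difference is that the paper intersects $\varphi^{-1}(\mathcal{W}')$ with $\mathcal{U}$ rather than with $\mathcal{U}_1$, and phrases the concatenation bound as $\kappa\underset{G\text{-}St(f^{-1}(\mathcal{W}))}{\simeq}\kappa'$ instead of tracking bridge points explicitly.
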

    \begin{proof}
        Let $\mathcal{U}$ be an open $G$-cover of $Y$. 
        By Proposition \ref{gcubierta}, there exists an open $G$-cover $\mathcal{W}$ of $Y$ that is a star-refinement of $\mathcal{U}$. 
    
        Let $\varphi:Y\to X$ be a $G$-map such that
         $\varphi f\underset{G\text{-}f^{-1}(\mathcal{W})}{\simeq}id_{X}$.
      
        Since $X\in G$-\textup{ANR}, by Theorem \ref{V-cercanas}, there exists a $G$-cover $\mathcal{W}'$ of $X$ that is an $h$-$G$-refinement of $f^{-1}(\mathcal{W})$. 

        Let $\mathcal{V}=\{U\cap W\ |\ U\in \mathcal{U},\ W\in \varphi^{-1}(\mathcal{W}')\}.$
        
        Then $\mathcal{V}$ is a $G$-cover which  is a  refinement of both $\varphi^{-1}(\mathcal{W}')$ and $\mathcal{U}$.

        \medskip 
        Next, we show that $f^{-1}(\mathcal{V})$ is an $h$-$G$-refinement of $f^{-1}(\mathcal{U})$.
       
        It is clear that $f^{-1}(\mathcal{V})$ is a  refinement of both $(\varphi f)^{-1}(\mathcal{W}')$ and $f^{-1}(\mathcal{U})$. 
        
        Let $Z$ be a metrizable $G$-space, and let $\kappa$, $\kappa':Z\to X$ be two  $f^{-1}(\mathcal{V})$-close $G$-maps. Then $\kappa$ and $\kappa'$ are also $(\varphi f)^{-1}(\mathcal{W}')$-close, 
        which implies that $\varphi f\kappa$ and $\varphi f\kappa'$ are $\mathcal{W}'$-close.
        
        Since $\mathcal{W}'$ is an $h$-$G$-refinement of $f^{-1}(\mathcal{W})$, we have  
           $ \varphi f\kappa \underset{G\text{-}f^{-1}(\mathcal{W})}{\simeq} \varphi f\kappa'.$ 
       Moreover, since $\varphi f\underset{G\text{-}f^{-1}(\mathcal{W})}{\simeq}id_{X}$ 
        we get that  $ \varphi f\kappa \underset{G\text{-}f^{-1}(\mathcal{W})}{\simeq} \kappa$ and $ \varphi f\kappa' \underset{G\text{-}f^{-1}(\mathcal{W})}{\simeq} \kappa'$ which implies that $ \kappa \underset{G\text{-}St(f^{-1}(\mathcal{W}))}{\simeq} \kappa'$.
        
        Finally, since $\mathcal{W}$ is a star-refinement of $\mathcal{U}$, it follows that $St(f^{-1}(\mathcal{W}))$ is a refinement of $f^{-1}(\mathcal{U})$. 
        Thus $\kappa\underset{G\text{-}f^{-1}(\mathcal{U})}{\simeq} \kappa',$ as required.
    \end{proof}

\begin{theorem}\label{Gkozlowski}
  Let $f:X\to Y$ be a $G$-map where $X\in G$-\textup{ANR} and $Y$ is a metrizable $G$-space. Suppose that $f(X)$ is dense in $Y$ and that, for every open cover $\mathcal{U}$ of $Y$, there exists a $G$-map $\varphi:Y\to X$ such that $\varphi f\underset{G\text{-}f^{-1}(\mathcal{U})}{\simeq} id_{X}$. Then $f$ is a fine $G$-homotopy equivalence, and $Y$ is a $G$-\textup{ANR}.  
\end{theorem}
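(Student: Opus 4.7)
My strategy is to show that $f$ is a fine $G$-homotopy equivalence and that $Y\in G$-\textup{ANR}; the latter then also follows from Corollary \ref{corTeorema7}. It suffices, by Theorem \ref{Teorema7}, to construct for every open $G$-cover $\mathcal{U}$ of $Y$ a $G$-map $\varphi\colon Y\to X$ with $f\varphi\underset{G\text{-}\mathcal{U}}{\simeq} id_Y$, since the hypothesis already provides the reverse relation $\varphi f\underset{G\text{-}f^{-1}(\mathcal U)}{\simeq}id_X$ for suitable $\varphi$. The real work lies in deriving the controlled $G$-homotopy on the $Y$-side from the hypothesis and from the density of $f(X)$.

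Given such a $\mathcal{U}$, I would take a $G$-star-refinement $\mathcal{U}_1\prec\mathcal{U}$ by Proposition \ref{gcubierta} and, by Lemma \ref{remark:Kozlowski-2}, a $G$-cover $\mathcal V$ refining $\mathcal{U}_1$ such that $f^{-1}(\mathcal V)$ is an $h$-$G$-refinement of $f^{-1}(\mathcal{U}_1)$. Applying the hypothesis with $\mathcal V$ produces $\varphi\colon Y\to X$ and a $G$-homotopy $H\colon X\times I\to X$ limited by $f^{-1}(\mathcal V)$, with $H_0=id_X$ and $H_1=\varphi f$. Lemma \ref{remmark:Kozlowski-1}, together with the density of $f(X)$ in $Y$, then gives that $f\varphi$ is $\mathrm{St}(\mathcal V)$-close to $id_Y$, hence $\mathcal{U}_1$-close. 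Moreover, the $G$-homotopy $H\circ(\varphi\times id_I)\colon Y\times I\to X$ is limited by $f^{-1}(\mathcal V)$ and joins $\varphi$ to $\varphi f\varphi$, so $\varphi$ and $\varphi f\varphi$ are $f^{-1}(\mathcal V)$-close as $G$-maps $Y\to X$. The $h$-$G$-refinement supplied by Lemma \ref{remark:Kozlowski-2} then upgrades this to a $G$-$f^{-1}(\mathcal{U}_1)$-homotopy, and post-composing with $f$ yields $f\varphi\underset{G\text{-}\mathcal{U}_1}{\simeq}(f\varphi)^2$.

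The main obstacle is to parlay this relation together with the closeness of $f\varphi$ to $id_Y$ into an actual $G$-$\mathcal U$-homotopy between $id_Y$ and $f\varphi$. The decisive resource is the $G$-$\mathcal V$-homotopy $fH\colon X\times I\to Y$ from $f$ to $f\varphi f$: for each $x\in X$ it provides a path in $Y$ from $f(x)$ to $f\varphi(f(x))$ contained in a single element of $\mathcal V$, so on the dense subset $f(X)\subseteq Y$ one already has the desired controlled paths from $y$ to $f\varphi(y)$. I expect the argument to proceed by a careful gluing step that propagates these paths from $f(X)\times I$ to $Y\times I$, exploiting the density of $f(X)$ and the extension properties on the $G$-\textup{ANR} $X$ given by Theorem \ref{Extension_homotopia}, while the iterated $G$-star-refinement combinatorics absorb the accumulated slack so as to keep the resulting $G$-homotopy $K\colon Y\times I\to Y$ limited by $\mathcal U$. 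Once $f\varphi\underset{G\text{-}\mathcal U}{\simeq} id_Y$ is established, Theorem \ref{Teorema7} gives $Y\in G$-\textup{ANR} and the pair $(\varphi f\underset{G\text{-}f^{-1}(\mathcal U)}{\simeq}id_X,\; f\varphi\underset{G\text{-}\mathcal U}{\simeq}id_Y)$ shows that $f$ is a fine $G$-homotopy equivalence.
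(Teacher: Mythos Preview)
Your setup with Lemmas~\ref{remmark:Kozlowski-1} and~\ref{remark:Kozlowski-2} is correct and matches the paper, but the plan you sketch for the ``main obstacle'' does not close. The difficulty is precisely that every $G$-homotopy with values in $Y$ that you can manufacture factors as $f\circ(\text{homotopy into }X)$: your tools---the hypothesis, the $h$-$G$-refinement on $X$, Theorem~\ref{Extension_homotopia}---all produce homotopies landing in the $G$-\textup{ANR} $X$, and $id_Y$ does not factor through $f$. Your proposed cure, extending the paths $fH$ from the dense set $f(X)$ to all of $Y$, cannot use Theorem~\ref{Extension_homotopia}: that theorem extends from a \emph{closed} invariant subset into a $G$-\textup{ANR}, whereas $f(X)$ is dense rather than closed and the target $Y$ is not yet known to be a $G$-\textup{ANR}. (There is also the issue that $fH$ is parametrized by $X$, not by $f(X)$; if $f$ is not injective you do not even have a well-defined map $f(X)\times I\to Y$.) So from $f\varphi\underset{G\text{-}\mathcal U_1}\simeq (f\varphi)^2$ together with ``$f\varphi$ is $\mathcal U_1$-close to $id_Y$'' you cannot reach $f\varphi\underset{G\text{-}\mathcal U}\simeq id_Y$ by any finite combination of the available moves.

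The paper supplies the missing idea: an infinite telescoping construction. One fixes an invariant metric $\rho$ on $Y$ and inductively builds $G$-covers $\mathcal U=\mathcal U_0,\mathcal U_1,\mathcal U_2,\ldots$ with $\mathcal U_n$ a star-refinement of $\mathcal U_{n-1}$, with $f^{-1}(\mathcal U_n)$ an $h$-$G$-refinement of $f^{-1}(\mathcal U_{n-1})$ (via Lemma~\ref{remark:Kozlowski-2}), and with $\operatorname{mesh}_\rho\mathcal U_n<2^{-n-1}$. Choosing $\varphi_n$ with $\varphi_n f\underset{G\text{-}f^{-1}(\mathcal U_{n+2})}\simeq id_X$, one finds (by exactly your computations) that $\varphi_n$ and $\varphi_{n+1}$ are $f^{-1}(\mathcal U_n)$-close, hence $G$-$f^{-1}(\mathcal U_{n-1})$-homotopic via some $F_n\colon Y\times I\to X$. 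The desired $H\colon Y\times I\to Y$ is then the infinite concatenation of the $fF_n$ on dyadic intervals $[2^{-n},2^{-n+1}]$, declared to be $id_Y$ at $t=0$. The decisive point---which replaces your nonworking extension-from-a-dense-set---is that continuity at $t=0$ and the bound ``limited by $\mathcal U$'' both follow from the metric estimates: since $f\varphi_n\to id_Y$ uniformly and the pieces have diameters summing to less than $1$, the concatenation genuinely reaches $id_Y$ in the limit. A single finite refinement cannot do this; you need the whole sequence.
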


\begin{proof}  
    Let $\mathcal{U}$ be an open cover of $Y$. 
    Since $Y$ is a metrizable $G$-space, there exists a compatible invariant metric $\rho$ on $Y$ such that $\mathcal{B}=\{{B}_{\rho}(y,1)\ |\ y\in Y\}$ is a refinement of $\mathcal{U}$, where we denote by ${B}_{\rho}(y,r)$ the open ball of radius $r$ centered at the point $y$.
    
    We will  construct, by induction, a sequence of open $G$-covers $\{\mathcal{U}_{n}\}_{n\geq 0}$  of $Y$ satisfying the following conditions:
    \begin{enumerate}[(1)]
        \item $\mathcal{U}_{0}=\mathcal{U}$ and $\mathcal{U}_{n}$ is a star-refinement of $\mathcal{U}_{n-1}$,
        \item $f^{-1}(\mathcal{U}_{n})$ is an $h$-$G$-refinement of $f^{-1}(\mathcal{U}_{n-1})$,
        \item  $\text{mesh}_{\rho}(\mathcal{U}_{n})<2^{-n-1},$
    \end{enumerate}
     for all $n\in\mathbb{N}$.
       
       \smallskip
       
    Let  $\mathcal{B}_{1}=\{B_{\rho}(y, 2^{-4})\ |\ y\in Y\}$. 
    Clearly, $\text{mesh}_{\rho}(\mathcal{B}_{1})<2^{-2}$. 
     By  Proposition \ref{gcubierta}, there exists a $G$-cover $\mathcal{V}_{1}$ of $Y$ that is a star-refinement of $\mathcal{B}_{1}$. 
    Using Lemma \ref{remark:Kozlowski-2}, we construct a $G$-cover $\mathcal{U}_{1}$ which is a refinement of $\mathcal{V}_{1}$ and $f^{-1}(\mathcal{U}_{1})$ is an  $h$-$G$-refinement of $f^{-1}(\mathcal{V}_{1})$.    

    Now, assume that the $G$-covers  $\mathcal{U}_{1}, \mathcal{U}_{2},...,\mathcal{U}_{n-1}$, have been constructed. 
    Let  $\mathcal{B}_{n}=\{B_{\rho}(y,2^{-n-3})\ |\ y\in Y\}$,
    and let $\mathcal{W}_{n}$  be a common refinement of $\mathcal{B}_{n}$ and $\mathcal{U}_{n-1}$.
    Note that $\text{mesh}_{\rho}(B_{n})<2^{-n-1}$. 
    By Proposition \ref{gcubierta}, there exists an open $G$-cover $\mathcal{V}_{n}$  that is a star-refinement of $\mathcal{W}_{n}$. 
    Applying Lemma \ref{remark:Kozlowski-2}, we construct $\mathcal{U}_{n}$ which is a refinement of $\mathcal{V}_{n}$ and $f^{-1}(\mathcal{U}_{n})$ is an  $h$-$G$-refinement of $f^{-1}(\mathcal{V}_{n})$. 
    
    By  the hypothesis, for every $\mathcal{U}_{n}$, there exists a $G$-map  $\varphi_{n}:Y\to X$ such that
    \begin{equation}\tag{*}   \label{eq1}
    \varphi_{n}f\underset{G\text{-}f^{-1}(\mathcal{U}_{n+2})}{\simeq} id_{X}.
    \end{equation}
    Using  Lemma \ref{remmark:Kozlowski-1},  we see that $f\varphi_{n}$ is $St(\mathcal{U}_{n+2})$-close to $id_{Y}$. 
    Since $\mathcal{U}_{n+2}$ is a star-refinement of $\mathcal{U}_{n+1}$, it follows that $f\varphi_{n}$ and $id_{Y}$ are  $\mathcal{U}_{n+1}$-close. 
    Additionally, $f\varphi_{n}$ and $f\varphi_{n+1}$ are $\mathcal{U}_{n}$-close, 
    so $\varphi_{n}$ and $\varphi_{n+1}$ are $f^{-1}(\mathcal{U}_{n})$-close. 
    
    Thus, for every $n\in\mathbb{N}$, there exists a $G$-$f^{-1}(\mathcal{U}_{n-1})$-homotopy \linebreak $F_{n}:Y\times [0,1]\to X$ such that $$F_{n}(y,0)=\varphi_{n}(y)\qquad \text{and}\qquad F_{n}(y,1)=\varphi_{n+1}(y),\qquad \forall y\in Y.$$

    Define $H:Y\times [0,1]\to Y$ by
    $$
     H(y,t)=\begin{cases}
                f(F_{n}(y,2-2^{n}t)),& \text{if}\ \ 2^{-n}\leq t\leq 2^{-n+1}\\
                \qquad \quad y, & \text{if}\ \ t=0.
            \end{cases}
    $$
    
    Let us verify that $H$ is continuous on $Y\times \{0\}$. 
    
    Fix $t>0$. Then, there exists $n\in \mathbb{N}$ such that $2^{-n}\leq t\leq 2^{-n+1}$. 
    Since $\text{mesh}_{\rho}(\mathcal{U}_{n})<2^{-n}$, for every $y\in Y$, we have:
    \begin{align*}
        \rho(H(y,t),y) = & \rho(fF_{n}(y,2-2^{n}t),y)\\
        \leq &\rho(fF_{n}(y,2-2^{n}t),f\varphi_{n}(y))+\rho(f\varphi_{n}(y),y)\\
        < & 2^{-n}+2^{-n-2}<2^{-n}2\leq 2t.
    \end{align*}
    This establishes the continuity of $H$ at $Y\times \{0\}$.
    Moreover, since $f$ and $F_n$ are equivariant, $H$ inherits this property as well.

    Besides, since
    \begin{align*}
        \text{diam}_\rho(H(\{y\}\times [0,1])) &= \text{diam}_{\rho}(H(\{y\}\times (0,1])\\
        &= \text{diam}_{\rho}\left(\bigcup_{n\in\mathbb{N}}fF_{n}(\{y\}\times (0,1])\right)\\
        &\leq \sum_{n\in\mathbb{N}} \text{mesh}_{\rho}(\mathcal{U}_{n-1})< \sum_{n\in\mathbb{N}} 2^{-n}=1,
    \end{align*}
    we conclude that $H$ is a $G$-$\mathcal{U}$-homotopy. 
    
    Furthermore, since $H(y,0)=y$ and $H(y,1)=f\varphi_{1}(y)$ for all $y\in Y$, it follows that 
    $f\varphi_{1}\underset{G\text{-}\mathcal{U}}{\simeq}id_{Y}$. 

    It follows from \eqref{eq1}  that $\varphi_{1}f$ is $G\text{-}f^{-1}(\mathcal{U}_{3})$-homotopic to $id_{X}$.
    
    Hence, $\varphi_{1}$ is a $G$-$\mathcal{U}$-homotopy inverse of $f$, and therefore, $f$ is a fine $G$-$\mathcal{U}$-homotopy equivalence.

    Finally, applying Corollary \ref{corTeorema7}, we conclude that $Y\in G$-ANR, completing the proof. 
\end{proof}

\medskip 

\section{$G$-homotopy dense subsets and $G$-ANR\ spaces}

$G$-homotopy dense subsets are of particular interest because 
they inherit the property of being a $G$-ANR. Moreover, if a $G$-space contains a $G$-homotopy dense subset that is a $G$-ANR, then the $G$-space itself is also a $G$-ANR. 
This relationship will be proved in Propositions \ref{GdensodeGANR} and \ref{recip}.

\begin{definition}
Let $A$ be an invariant subset of a $G$-space $X$. We will say that $A$ is \textit{$G$-homotopy dense} in $X$, if there exists a $G$-homotopy $F:X\times I\to X$, where $F_{0}=id_{X}$ and $F_{t}(X)\subset A$ for every $t\in(0,1]$.  
\end{definition}

\begin{proposition}\label{GdensodeGANR}
Let  $A$ be a  $G$-homotopy dense subset of a $G$-space $X$. 
If $X$ is a $G$-\textup{ANR} then $A$ is also a $G$-\textup{ANR}.
\end{proposition}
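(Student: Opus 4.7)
The plan is to verify the hypotheses of Theorem \ref{Teorema7}: for every open cover $\mathcal{U}$ of $A$, I will exhibit $G$-maps $\iota: A \to X$ and $\varphi: X \to A$ such that $\varphi \circ \iota$ is $G$-$\mathcal{U}$-homotopic to $id_{A}$. The dominating $G$-\textup{ANR} will be $X$ itself; $\iota$ will be the equivariant inclusion (well-defined since $A$ is $G$-invariant); and $\varphi$ will be built from the $G$-homotopy $F: X \times I \to X$ witnessing $G$-homotopy density of $A$ in $X$, so that $F_{0} = id_{X}$ and $F_{t}(X) \subset A$ for every $t \in (0, 1]$.

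The template for $\varphi$ is $\varphi(x) = F(x, \tau(x))$ for a suitably chosen continuous $G$-invariant function $\tau: X \to (0, 1]$ depending on $\mathcal{U}$. Because $\tau > 0$, the image of $\varphi$ lies in $A$; because $\tau$ is $G$-invariant and $F$ is equivariant, $\varphi$ is a $G$-map. The candidate homotopy $H: A \times I \to A$ given by $H(x, s) = F(x, s\tau(x))$ satisfies $H_{0} = id_{A}$ and $H_{1} = \varphi \circ \iota$, so the task reduces to ensuring that for every $x \in A$ the trajectory $\{F(x, r) : r \in [0, \tau(x)]\}$ lies inside a single member of $\mathcal{U}$, so that $H$ is $G$-$\mathcal{U}$-limited.

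The construction of $\tau$ proceeds as follows. By Proposition \ref{gcubierta} I may replace $\mathcal{U}$ by a $G$-cover refining it, and then extend each of its elements to an open subset of $X$. For every $x \in X$, continuity of $F$ at $(x, 0)$ (together with the density of $A$ in $X$, which is a consequence of $G$-homotopy density) yields an open neighborhood $W_{x}$ of $x$ in $X$ and a number $t_{x} \in (0, 1]$ such that $F(W_{x} \times [0, t_{x}])$ is contained in one of these extended open sets. The paracompactness of the metrizable space $X$ provides a locally finite partition of unity subordinate to $\{W_{x}\}_{x \in X}$, which assembles a continuous $\tau_{0}: X \to (0, 1]$ whose value at each $x$ is bounded by the $t_{x_{\alpha}}$'s for the indices $\alpha$ active at $x$. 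Averaging $\tau_{0}$ over the compact group $G$ yields the desired $G$-invariant $\tau$. Once $\varphi \circ \iota \underset{G\text{-}\mathcal{U}}{\simeq} id_{A}$ is established, Theorem \ref{Teorema7} gives $A \in G$-\textup{ANR}.

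The main obstacle is the simultaneous control of $\tau$: continuity, strict positivity, $G$-invariance, and the trajectory condition must all hold. The first two come from standard partition-of-unity techniques; $G$-invariance comes from equivariant averaging, which preserves positivity because $g \mapsto \tau_{0}(gx)$ is continuous and strictly positive on the compact set $G$, hence has a positive minimum; and the trajectory condition is inherited from the subordination of $\tau$ to $\{W_{x}\}_{x \in X}$, since at any $x \in A$ the value $\tau(x)$ is no larger than some $t_{x_{\alpha}}$ with $x \in W_{x_{\alpha}}$.
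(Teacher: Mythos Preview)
There is a genuine gap in the construction of $\tau$. Your claim that for every $x\in X$ one can find $W_x$ and $t_x$ with $F(W_x\times[0,t_x])$ inside one of the extended open sets fails: when $x\in X\setminus A$, the point $F(x,0)=x$ need not lie in any open $V\subset X$ with $V\cap A\in\mathcal{U}$, and density of $A$ does not help. Worse, the whole strategy of taking $X$ as the dominating space can break down. With $G$ trivial, $X=[0,1]$, $A=(0,1]$, $F(x,t)=x+t(1-x)$, and $\mathcal{U}=\{(2^{-n-1},2^{-n+1})\cap A:n\ge 0\}$, any continuous $\varphi:X\to A$ with $\varphi|_A\underset{\mathcal{U}}{\simeq}id_A$ must satisfy $\varphi(x)<4x$ for $0<x\le 1/2$ (the homotopy track through $x$ lies in a single $(2^{-m-1},2^{-m+1})$ containing $x$, whose right endpoint is below $4x$), so $\varphi(x)\to 0$ as $x\to 0^+$; but continuity at $0$ would then force $\varphi(0)=0\notin A$. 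Hence $X$ does not $\mathcal{U}$-dominate $A$, and no choice of $\tau:X\to(0,1]$ can exist here.

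Your route through Theorem~\ref{Teorema7} can be rescued by changing the dominating space to $X\times(0,1]$: build $\tau:A\to(0,1]$ from the trajectory condition on $A$ alone (the function $\gamma(a)=\sup\{t:F(\{a\}\times[0,t])\subset U\ \text{for some}\ U\in\mathcal{U}\}$ is invariant, positive and lower semicontinuous on $A$, so Lemma~\ref{g-funcionsemicontinua} applies), send $a\mapsto(a,\tau(a))$, and return via $(x,t)\mapsto F(x,t)\in A$. The paper's own proof is quite different and more direct: it verifies the $G$-ANE property of $A$ by first extending a test $G$-map $f:B\to A$ to $\overline{f}:U\to X$ using $X\in G$-ANR, and then pushing the values into $A$ via $\widetilde{f}(y)=F_{d(y,B)}\big(\overline{f}(y)\big)$ for an invariant metric $d$ on the domain, which lands in $A$ because $d(y,B)>0$ for $y\notin B$ and agrees with $f$ on $B$.
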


\begin{proof}
Since $A$ is $G$-homotopy dense, there exists a $G$-homotopy $F:X\times I\to X$ such that $F_{0}(x)=x$ and $F_{t}(x)\in A$ for all $x\in X$ and $t\in(0,1]$.

Let $Y$ be a metrizable $G$-space, $B$ an invariant closed subset of $Y$ and $f:B\to A$ a $G$-map. 
Since $X$ is a $G$-ANR, there exists an invariant neighborhood $U$ of $B$ in $Y$ and a $G$-map $\overline{f}:U\to X$ such that $\overline{f}|_B=f$.

Let $d$ be an invariant metric on $Y$ such that $\operatorname{diam}\,Y<1$. 
Define $\widetilde{f}:U\to A$  by  
$$\widetilde{f}(x)=F_{d(x,B)}\big(\overline{f}(x)\big)$$ for all $x\in U.$ 

First, we verify that $\widetilde{f}(U)\subset A$.
Let $x\in U$.  
If $x\in B$ then  $d(x,B)=0$, so $\widetilde{f}(x)=F_{0}(\overline{f}(x))=\overline{f}(x)=f(x)\in A$. 

If $x\notin B$ then  $d(x,B)>0$ and $\widetilde{f}(x)=F_{d(x,B)}(\overline{f}(x))\in A$.
Therefore, $\widetilde{f}(U)\subset A$.

Additionally, $\widetilde{f}$ is continuous as it is a composition of continuous maps. 

To verify that $\widetilde{f}$ is an equivariant map, we  use the fact that 
$F$ and $\overline{f}$ are equivariant maps and  $d$ is an invariant metric.

Let  $g\in G$ and $x\in U$.
Then, 
   \begin{align*}
    \widetilde{f}(gx)&=F_{d(gx,B)}\big(\overline{f}(gx)\big)
    =F_{d(gx,gB)}\big(g\overline{f}(x)\big) 
    =F_{d(x,B)}\big(g\overline{f}(x)\big) \\
    &=gF_{d(x,B)}\big(\overline{f}(x)\big) 
    =g\widetilde{f}(x).
\end{align*}
    
Therefore, $\widetilde{f}$ is the desired $G$-extension of $f$.
\end{proof}

\medskip 
As we will see in Theorem \ref{gdenso-homofina}, $G$-homotopy dense subsets can be characterized by fine $G$-homotopy equivalences. 
For this purpose, we first establish the following lemma.

\begin{lemma}\label{g-funcionsemicontinua}
Let $X$ be a paracompact $G$-space and let $s,h:X\to \mathbb{R}$ be invariant maps such that $s$ is upper semi-continuous, $h$ is lower semi-continuous, and $s(x)<h(x)$ for every $x\in X$. 
Then there exists an invariant continuous map $f:X\to \mathbb{R}$ such that $s(x)<f(x)<h(x)$ for every $x\in X$.
\end{lemma}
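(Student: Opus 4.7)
The plan is to reduce the problem to the non-equivariant case by descending to the orbit space and invoking the classical Kat\v{e}tov--Tong insertion theorem. Since $G$ is compact and $X$ is paracompact Hausdorff, the orbit projection $\pi\colon X\to X/G$ is continuous, open, and closed; in particular, $X/G$ inherits paracompactness from $X$ (closed continuous surjections preserve paracompactness), is Hausdorff, and therefore normal.

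Since $s$ and $h$ are invariant, they factor uniquely through $\pi$, say as $s=\bar{s}\circ\pi$ and $h=\bar{h}\circ\pi$. The next step is to verify that $\bar{s}$ is upper semi-continuous and $\bar{h}$ is lower semi-continuous on $X/G$. Given $c\in\mathbb{R}$, the set $\{x\in X:s(x)<c\}$ is open and $G$-invariant, so its image under $\pi$ is $\{\bar{x}\in X/G:\bar{s}(\bar{x})<c\}$ and is open because $\pi$ is open; the symmetric argument handles $\bar{h}$. The hypothesis $s<h$ pointwise gives $\bar{s}<\bar{h}$ on $X/G$ immediately.

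Now I would apply the strict form of the Kat\v{e}tov--Tong insertion theorem to the normal space $X/G$ to obtain a continuous $\bar{f}\colon X/G\to\mathbb{R}$ with $\bar{s}<\bar{f}<\bar{h}$. Setting $f:=\bar{f}\circ\pi\colon X\to\mathbb{R}$ yields a continuous map that is invariant by construction (it factors through $\pi$), and for every $x\in X$ we have $s(x)=\bar{s}(\pi(x))<\bar{f}(\pi(x))=f(x)<\bar{h}(\pi(x))=h(x)$, as required.

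The main obstacle is essentially bookkeeping: recalling that the classical insertion theorem admits the strict-inequality version (which is sometimes stated only with $\leq$), and confirming that paracompactness and the semi-continuity hypotheses genuinely pass through $\pi$. The latter is where both the continuity and the \emph{openness} of the orbit map are used, and it is the one place one must be careful rather than just quote a standard theorem.
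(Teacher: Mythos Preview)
Your proof is correct and follows essentially the same route as the paper: pass to the orbit space via the open, closed (indeed perfect) orbit map $\pi$, transfer the semi-continuity of $s$ and $h$ to the induced maps on $X/G$ using openness of $\pi$, apply the strict Kat\v{e}tov--Tong insertion theorem there, and pull back along $\pi$. The only cosmetic difference is that the paper invokes perfectness of $\pi$ (citing Bredon) to deduce paracompactness of $X/G$, whereas you phrase it via closedness of $\pi$; the substance is identical.
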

\begin{proof}
  By the compactness of $G$, the orbit map $\pi :X\to X/G$, $x\mapsto G(x)$, is a perfect map (see \cite[Chapter I, Theorem 3.1]{Bredon72}).
  This implies that $X/G$ is a paracompact space. 
  The invariant maps $h$ and $s$  induce  continuous  maps $\tilde{s},\tilde{h}:X/G\to \mathbb{R}$ defined by  
$$\tilde{s}(G(x))=s(x) \ \text{ and } \  \tilde{h}(G(x))=h(x)\quad \text{for every}\ \ G(x)\in X/G.$$

Observe that  $\tilde{s}$ is an upper semi-continuous map because  $\pi$ is an open map, $\tilde{s}^{-1}(-\infty,t)=  \pi(s^{-1}(-\infty,t))$ 
 and  $s^{-1}(-\infty, t)$ is open in $X$ due to the upper semi-continuity of $s$.  
Similarly, $\tilde{h}$ is a lower semi-continuous map. 

 By the Kat\v{e}tov--Tong Insertion Theorem (\cite[Theorem 1]{Katetov}), there exists a continuous map 
 $\tilde{f}:X/G\to \mathbb{R}$ such that, for every $x\in X$,
 $$\tilde{s}(G(x))<\tilde{f}(G(x))<\tilde{h}(G(x)).$$  
 
 Define $f=\tilde{f}\circ\pi:X\to \mathbb{R}$. 
 The map $f$ is  continuous and invariant, and
 for every $x\in X$ one has
 $$s(x)=\tilde{s}(G(x))<\tilde{f}(G(x))=\tilde{f}(\pi(x))=f(x)$$
 and
 $$f(x)=\tilde{f}(\pi(x))=\tilde{f}(G(x))<\tilde{h}(G(x))=h(x).$$
 Thus, $f$ satisfies the desired properties.
\end{proof}

\medskip 
\begin{theorem}\label{gdenso-homofina}
 Let $X$ be a metrizable $G$-space and $A$ an invariant subset of $X$. 
 Then $A$ is $G$-homotopy dense in $X$ if and only if the inclusion map $i: A\hookrightarrow X$ is a fine $G$-homotopy equivalence.   
\end{theorem}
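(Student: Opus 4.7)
The plan is to handle the two implications separately.

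For the necessity direction, suppose $A$ is $G$-homotopy dense in $X$ via a $G$-homotopy $F\colon X\times I \to X$, and fix an arbitrary open cover $\mathcal{U}$ of $X$; by Proposition \ref{gcubierta} I may replace $\mathcal{U}$ by a refining $G$-cover. I would then define the invariant function
$$\alpha(x) = \min\!\Bigl(1,\ \sup\bigl\{\,t \in [0,1] : F(\{x\}\times[0,t]) \subset U \text{ for some } U\in\mathcal{U}\,\bigr\}\Bigr),$$
and check that continuity of $F$ together with $F(x,0)=x$ makes $\alpha$ strictly positive and lower semi-continuous, while the $G$-cover assumption and equivariance of $F$ make $\alpha$ invariant. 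Lemma \ref{g-funcionsemicontinua} (applied with the zero function and $\alpha$) then supplies an invariant continuous $\tau\colon X \to \mathbb{R}$ with $0 < \tau(x) < \alpha(x)$. The formula $\varphi(x) = F(x,\tau(x))$ defines a $G$-map $X \to A$, and $H(x,s) = F(x, s\tau(x))$ is a $G$-$\mathcal{U}$-homotopy from $id_X$ to $i\varphi$, because each track $F(\{x\}\times[0,\tau(x)])$ lies in a single element of $\mathcal{U}$ by the choice of $\tau$. The restriction $H|_{A\times I}$ still takes values in $A$ (the time-zero slice lands in $A$ trivially, and positive times do so by $G$-homotopy density), giving the required $G$-$i^{-1}(\mathcal{U})$-homotopy from $id_A$ to $\varphi i$. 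Since $\mathcal{U}$ was arbitrary, $i$ is a fine $G$-homotopy equivalence.

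For the sufficiency direction, I fix an invariant metric $d$ on $X$ and set $\mathcal{U}_n = \{B_d(x, 2^{-n}) : x \in X\}$. For each $n \geq 1$, the fine $G$-homotopy equivalence supplies a $G$-map $\varphi_n\colon X \to A$ together with a $G$-$\mathcal{U}_n$-homotopy $H_n$ between $id_X$ and $i\varphi_n$ in $X$, and a $G$-$i^{-1}(\mathcal{U}_n)$-homotopy $L_n$ between $id_A$ and $\varphi_n i$ in $A$. The key intermediate step is to build, for each $n$, a $G$-homotopy $K_n\colon X \times I \to A$ from $\varphi_n$ to $\varphi_{n+1}$ whose tracks have diameter $O(2^{-n})$. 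I would construct $K_n$ as the concatenation of $(x,s) \mapsto \varphi_n\!\bigl(H_{n+1}(x,s)\bigr)$ (from $\varphi_n$ to $\varphi_n i \varphi_{n+1}$, landing in $A$ because $\varphi_n$ does) with $(x,s) \mapsto L_n\!\bigl(\varphi_{n+1}(x), s\bigr)$ (from $\varphi_n i \varphi_{n+1}$ to $\varphi_{n+1}$, also in $A$), matching the pieces at the midpoint. The estimates $d(i\varphi_n(y), y) < 2^{-n+1}$ for all $y$ (from the $\mathcal{U}_n$-limitation of $H_n$) and the $i^{-1}(\mathcal{U}_n)$-limitation of $L_n$ then yield a constant $C$ with $K_n(\{x\} \times I) \subset B_d(x,\, C\cdot 2^{-n})$.

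Finally, I would glue these homotopies into $F\colon X \times I \to X$ by setting $F(x,0) = x$, reparametrizing each $K_n$ (reversed) onto $[2^{-(n+1)}, 2^{-n}]$ so that $F(x, 2^{-n}) = \varphi_n(x)$ and $F(x, 2^{-(n+1)}) = \varphi_{n+1}(x)$, and taking $F(x,t) = \varphi_1(x)$ on $[1/2,1]$. Equivariance and the condition $F_t(X) \subset A$ for $t > 0$ are then immediate from the construction, and the pieces agree at the gluing instants $t = 2^{-n}$. The main obstacle is continuity along the slice $X \times \{0\}$: here the track diameter bound yields an estimate of the form $d(F(x,t), x) \leq C' t$ for $t \in (0,1/2]$, which provides uniform convergence $F_t \to id_X$ as $t \to 0^+$ and hence continuity of $F$ at every point of $X \times \{0\}$. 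Continuity elsewhere is routine.
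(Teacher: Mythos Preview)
Your proposal is correct and follows essentially the same route as the paper's proof: in the forward direction you both introduce the lower semi-continuous invariant ``maximal time'' function, insert a continuous invariant positive function below it via Lemma~\ref{g-funcionsemicontinua}, and reparametrize $F$ accordingly; in the reverse direction you both build the homotopy in $A$ between $\varphi_n$ and $\varphi_{n+1}$ as the concatenation of $\varphi_n \circ H_{n+1}$ with $L_n(\varphi_{n+1}(\,\cdot\,),\,\cdot\,)$, splice these onto a geometrically shrinking sequence of time intervals, and use the resulting linear estimate $d(F(x,t),x)=O(t)$ for continuity at $t=0$. The only cosmetic differences are your choice of base $2$ versus the paper's base $3$ and your use of a constant cap on $[1/2,1]$.
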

\begin{proof}
$(\Rightarrow)$ 
Suppose that $A$ is $G$-homotopy dense in $X$. 
Then there exists a $G$-homotopy $F:X\times I\to X$ such that $F_{0}(x)=x$ and $F_{t}(x)\in A$ for every $x\in X$ and $t\in(0,1]$. 

Let $\mathcal{U}$ be an open cover of $X$. By Proposition \ref{gcubierta}, we  can assume that $\mathcal{U}$ is a $G$-cover. 

Let us define $\gamma:X\to (0,1]$ as  
$$\gamma(x)=\sup \{t\in I \mid \exists  U\in \mathcal{U} \ \text{such that } \  F(\{x\}\times [0, t]) \subset U\}$$
for  every $ x\in X$. 

The function $\gamma$ is well defined due to the continuity of $F$.
Moreover, since $\mathcal{U}$ is a $G$-cover, it follows that $\gamma(gx)=\gamma(x)$ for all $x\in X$ and $g \in G$.

Next, we will show  that $\gamma$ is lower semi-continuous. 

Let $r>0$ and $x\in \gamma^{-1}(r,\infty)$. 
Then $\gamma(x)>r$.
 There exists $t\in I$ such that $r<t<\gamma(x)$ and  
 $F(\{x\}\times [0, t]) \subset U$ for some $ U\in \mathcal{U}$. 
 
 By the continuity of $F$ and the compactness of $[0,t]$, there exists an open neighborhood $V$ of $x$ in $X$ such that $F(V\times [0,t])\subset U$.
 
 Consequently, $\gamma(y)\geq t>r$ for all $y\in V$  
 implying that $V\subset \gamma^{-1}(r,\infty)$. Thus,  $\gamma^{-1}(r,\infty)$ is open in $X$, 
 proving that $\gamma$ is lower semi-continuous.
 
Using Lemma \ref{g-funcionsemicontinua}, we can find an invariant continuous map 
$\alpha:X\to (0,1]$ such that $\alpha(x)<\gamma(x)$ for all $x\in X$. 

Now define 
$$F^{\alpha}:X\times [0, 1]\to X\ \ \text{ by} \ \ 
F^{\alpha}(x, t) = F \bigl( x, t \alpha(x) \bigr)$$
 for every $x\in X$ and $t\in I$.

 The map $F^{\alpha}$ is continuous as it is the composition of continuous maps, 
 and it is equivariant since both $F$ and $\alpha$ are $G$-maps.

  We now verify that $F^{\alpha}$ is  a $G$-$\mathcal{U}$-homotopy. 
Since $\alpha(x)<\gamma(x)$,  for every $x\in X$, there exists $t_{0}\in [0, 1]$ such that $\alpha(x)<t_0<\gamma(x)$ and $F(\{x\}\times [0, t_0])\subset U$ for some $U\in \mathcal{U}$. 
 Since for every $t\in I$,  $t\alpha(x)\leq\alpha(x)<t_{0}$, we have  
 $F^{\alpha}(x, t)=F\big(x, t\alpha(x)\big)\in  F\big(\{x\}\times [0, t_0]\big)\subset  U$, as required.
 
Furthermore, since $\alpha(x)>0$, then $F^{\alpha}(x, 1)=F\big(x, \alpha(x)\big)\in A$ for every $x\in X$.  

Define $h:X\to A$  by $h(x)=F^{\alpha}(x, 1)$.
It follows that 
$i\circ h\underset{G\text{-}\mathcal{U}}{\simeq} id_{X}$ and $h\circ i\underset{G\text{-}i^{-1}(\mathcal{U})}{\simeq} id_{A}$, proving  that $i:A\hookrightarrow X$ is a fine $G$-homotopy equivalence.

$(\Leftarrow)$ 
Suppose that the inclusion map $i:A\hookrightarrow X$ is a fine $G$-homotopy equivalence. 
Let $d$ be a compatible metric for $X$, and  
define, for every $n\in\mathbb{N}$, the open cover $\mathcal{U}_{n}=\{B_{d}(x,3^{-n+1}2^{-1})\ |\ x\in X\}$ which clearly satisfies $\operatorname{mesh}\ \mathcal{U}_{n}< 3^{-n}$.

By hypothesis, for every $n\in \mathbb{N}$, there exist $G$-maps $f_{n}:X\to A$, $G$-$\mathcal{U}_{n}$-homotopies $F_{n}:X\times I\to X$ and $G$-$i^{-1}(\mathcal{U}_{n})$-homotopies $T_{n}:A\times I\to A$ satisfying:
$$F_{n}(x,0)=x\ \text{and}\ F_{n}(x,1)=f_{n}(x)\ \text{for all}\ x\in X$$
and $$T_{n}(a,0)=a\ \text{and}\ T_{n}(a,1)=f_{n}(a)\ \text{for all}\ a\in A.$$

Define $F:X\times I\to X$ as follows:
\medskip
$$F(x,t)=\begin{cases}
f_{n}F_{n+1}(x,3-3^{n}t), & \text{if}\ \ 3^{-n}2\leq t\leq 3^{-n+1},\\
T_{n}\big(f_{n+1}(x),3^{n}t-1\big), & \text{if}\ \ 3^{-n}\leq t\leq 3^{-n}2,\\
x, &  \text{if}\ \ t=0.
\end{cases}$$

Let us verify the continuity of $F$ at $X \times \{0\}$. 
Let $t > 0$. Then there exists $n \geq 1$ such that $3^{-n} \leq t \leq 3^{-n+1}$. 

We will consider two cases: $3^{-n}2 \leq t$ \ and \  $t \leq 3^{-n}2$.

Case 1. If $3^{-n}2\leq t$ then for every $x\in X$ we have 
\begin{align*}
d\big(&F(x, t),x\big)= d\big(f_{n}F_{n+1}(x,3-3^{n}t),x\big)\\
& \leq d\big(f_{n}F_{n+1}(x,3-3^{n}t),F_{n+1}(x,3-3^{n}t)\big)+ d\big(F_{n+1}(x,3-3^{n}t),x\big)\\
&< 3^{-n}+3^{-n+1}<3^{-n}2\leq 2t.
\end{align*}

Case 2. If $t\leq 3^{-n}2$ a similar argument shows that $d(F(x, t),x)<2t$ for every $x\in X$. 

\medskip
Now, let $\varepsilon>0$ and $x_{0}\in X$ be fixed. Define $\delta = \frac{\varepsilon}{2}$ and let  $x\in X$ satisfying $d(x,x_{0})<\delta$. 
Then for every $0<t<\frac{\varepsilon}{2}$ we have 
$$d\big(F(x,t),x_{0}\big)\leq d\big(F(x,t),x\big)+d(x,x_{0})<\frac{\varepsilon}{2}+\frac{\varepsilon}{2}=\varepsilon.$$
If $t=0$ then   $d(F(x,0),x_{0})=d(x,x_{0})<\delta <\varepsilon$. 
Therefore, $$F\big(B_{d}(x_{0},\delta)\times [0,\varepsilon/2)\big)\subset B_{d}(x_{0},\varepsilon).$$
This proves the continuity of $F$ at $X\times \{0\}$. 

\medskip 
It is evident from the definition of $F$ that it is an equivariant map and that $F(X\times (0,1])\subset A$.
Hence, $A$ is $G$-homotopy dense in $X$.
\end{proof}

\medskip 
As a direct consequence of Theorem \ref{gdenso-homofina} and Corollary \ref{corTeorema7}, we get the converse of Proposition \ref{GdensodeGANR}.

\begin{proposition} \label{recip}
Let $X$ be a metrizable $G$-space and $A\subset X$  a $G$-homotopy dense subset. If $A$ is a $G$-\textup{ANR} then $X$ is also  a $G$-\textup{ANR}.
\end{proposition}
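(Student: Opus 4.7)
The plan is immediate since the statement is advertised as a direct consequence of the two named results, so I would simply chain them together. First, I would invoke Theorem \ref{gdenso-homofina}: because $A$ is assumed to be a $G$-homotopy dense invariant subset of the metrizable $G$-space $X$, the inclusion map $i\colon A\hookrightarrow X$ is a fine $G$-homotopy equivalence. No hypothesis from Theorem \ref{gdenso-homofina} is missing here, since the metrizability of $X$ is given and $A$ is an invariant subset by the very definition of $G$-homotopy density.

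Next, I would apply Corollary \ref{corTeorema7} with the roles of $X$ and $Y$ played by $A$ and $X$, respectively, and with $f = i$: the map $i\colon A\to X$ is a fine $G$-homotopy equivalence whose domain $A$ is a $G$-ANR by assumption, so the corollary forces the codomain $X$ to be a $G$-ANR as well. That finishes the proof.

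There is no real obstacle to overcome; the only thing to be careful about is to check that Corollary \ref{corTeorema7} is applied in the correct direction, namely that the $G$-ANR hypothesis is placed on the domain $A$ (which is what we are given) and that the conclusion is drawn about the codomain $X$ (which is what we want). Since both Theorem \ref{gdenso-homofina} and Corollary \ref{corTeorema7} have already been established in the previous sections, the proof will consist only of two sentences that name them in this order.
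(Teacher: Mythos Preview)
Your proposal is correct and matches the paper's own proof exactly: the paper states Proposition~\ref{recip} as a direct consequence of Theorem~\ref{gdenso-homofina} and Corollary~\ref{corTeorema7}, chained in precisely the order you describe. There is nothing to add.
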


Additionally, Theorem \ref{gdenso-homofina} allows us to derive the following immediate consequence of Theorem \ref{Gkozlowski}.

\begin{proposition}
    Let $X$ be a metrizable $G$-space and  $A$  an invariant subset of $X$. 
    Assume that $A$ is a $G$-\textup{ANR} which is dense in $X$ and that for every open cover $\mathcal{U}$ of $X$ there exists a $G$-map $\varphi:X\to A$ such that $\varphi|_{A}\underset{G\text{-}\mathcal{U}}{\simeq} id_{A}$. 
    Then $A$ is $G$-homotopy dense in $X$ and $X$ is a $G$-\textup{ANR}.
\end{proposition}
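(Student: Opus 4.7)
The plan is to recognize the proposition as an immediate corollary of Theorem \ref{Gkozlowski} applied to the inclusion map $i:A\hookrightarrow X$, combined with Theorem \ref{gdenso-homofina} to translate the conclusion into $G$-homotopy density.

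First, I would verify that the hypotheses of Theorem \ref{Gkozlowski} hold for the $G$-map $i:A\to X$. The roles match: $A$ plays the role of the $G$-ANR source, and $X$ the role of the metrizable $G$-space target. By hypothesis, $A\in G$-ANR and $i(A)=A$ is dense in $X$. For the remaining hypothesis, given an open cover $\mathcal{U}$ of $X$, we are furnished with a $G$-map $\varphi:X\to A$ such that $\varphi|_A\underset{G\text{-}\mathcal{U}}{\simeq} id_A$. Since $\varphi\circ i=\varphi|_A$ and any $G$-homotopy $T:A\times I\to A$ between these two maps takes all its values in $A$, the condition that $T$ is limited by $\mathcal{U}$ is equivalent to $T$ being limited by $i^{-1}(\mathcal{U})=\{U\cap A\mid U\in\mathcal{U}\}$. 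Hence $\varphi\circ i\underset{G\text{-}i^{-1}(\mathcal{U})}{\simeq}id_A$, which is exactly the hypothesis of Theorem \ref{Gkozlowski}.

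Applying Theorem \ref{Gkozlowski} therefore produces two conclusions at once: the inclusion $i:A\hookrightarrow X$ is a fine $G$-homotopy equivalence, and $X\in G$-ANR. For the first claim of the proposition, I would then invoke Theorem \ref{gdenso-homofina}, which asserts that an invariant subset $A$ of a metrizable $G$-space $X$ is $G$-homotopy dense in $X$ if and only if the inclusion $A\hookrightarrow X$ is a fine $G$-homotopy equivalence. This immediately yields $A$ is $G$-homotopy dense in $X$.

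There is no substantial obstacle here; the only point requiring a moment of care is the small translation between the ``$G$-$\mathcal{U}$-homotopic'' formulation in the hypothesis (where $\mathcal{U}$ is a cover of $X$) and the ``$G$-$i^{-1}(\mathcal{U})$-homotopic'' formulation required by Theorem \ref{Gkozlowski}, but this is automatic because the homotopy is confined to $A$. All the technical work has already been performed in Theorem \ref{Gkozlowski} and Theorem \ref{gdenso-homofina}, so the proof reduces to a short verification and two citations.
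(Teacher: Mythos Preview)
Your proposal is correct and matches the paper's approach exactly: the paper presents this proposition explicitly as ``the following immediate consequence of Theorem \ref{Gkozlowski}'' derived via Theorem \ref{gdenso-homofina}, with no further proof given. Your verification of the hypotheses (in particular the remark that a $G$-$\mathcal{U}$-homotopy with values in $A$ is the same thing as a $G$-$i^{-1}(\mathcal{U})$-homotopy) is the only detail needed, and you handle it correctly.
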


\medskip

\section{Equivariant inverse sequences and $G$-ANR spaces}

An {\it inverse sequence} in the category  of $G$-spaces and $G$-maps,
denoted by $\underline{X}=\{X_{i},p^{j}_{i}\}$, consists of  $G$-spaces $X_i$ for each $i\in \mathbb{N}$
and  $G$-maps $p^{j}_{i}:X_j\to X_i$ for all $i,j\in\mathbb{N}$ with  $i\leq j$. 
These  $G$-maps are called bonding maps and satisfy the properties $p^{j}_{j}=1_{X_{j}}:X_j\to X_j$, and 
$p^{j}_{i}\circ p^{k}_{j}=p^{k}_{i}$ for $i\leq j \leq k$.

Given an inverse sequence $\underline{X}=\{X_{i},p^{j}_{i}\}$ and a $G$-space $Y$, 
a family of $G$-maps $\{q_i:Y\to X_i\}_{i\in\mathbb{N}}$, known as the projections,
is called a {\it cone over} $\underline{X}$ if it satisfies $p^j_i\circ q_j=q_i$ for all $i\leq j$.

The {\it inverse limit} of $\underline{X}=\{X_{i},p^{j}_{i}\}$, denoted by $X=\mathop{lim}\limits_{\longleftarrow}\underline{X}$, is a $G$-space $X$ together with a cone over $\underline{X}$, 
$\{p_i:X\to X_i\}_{i\in\mathbb{N}}$ that
 satisfies the following universal property: \\
if $Y$ is a $G$-space and $\{q_i:Y\to X_i\}_{i\in\mathbb{N}}$ is a cone over $\underline{X}$, 
then there exists a unique $G$-map $f:Y\to X$ such that
$p_i\circ f=q_i$, for all $i\in\mathbb{N}$.
  
The inverse limit can be explicitly described  as 
$$\mathop{lim}\limits_{\longleftarrow}\underline{X}=\Big\{x\in\prod_{i\in\mathbb{N}}X_{i} \mid p_{i}(x)=p^{j}_{i}p_{j}(x), \, i\leq j\Big\},$$
where $p_j$ is the projection onto the $j$-th factor.

This space is equipped with the induced topology and  the diagonal action of $G$ making it a $G$-space.

 \bigskip
Now  we are prepared to present the equivariant extension of an important result of D.W. Curtis \cite[Theorem 3.2]{Curtis71}. This result establishes sufficient conditions under which the inverse limit of $G$-ANR spaces is a $G$-ANR.

\begin{theorem} \label{GCurtis}
Let $\underline{X}=\{X_{i},p^j_{i}\}$ be an inverse sequence of completely metrizable $G$-spaces and let $X=\mathop{lim}\limits_{\longleftarrow}\underline{X}$ be its inverse limit. 
Assume that each $X_{i}$ is a $G$-\textup{ANR} and every bonding map $p^{i+1}_{i}:X_{i+1}\to X_{i}$ is a fine $G$-homotopy equivalence. Then $X$ is a $G$-\textup{ANR}. Moreover, if every $X_{i}$ is a $G$-\textup{AR}, then $X$ is also a $G$-\textup{AR}.
\end{theorem}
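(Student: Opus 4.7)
The plan is to apply Theorem~\ref{Gkozlowski} with a $G$-map $\sigma\colon X_n\to X$ (for some fixed $n$) playing the role of $f$; the $G$-ANR property of $X_n$ then transfers to the inverse limit. First, $X$ is completely metrizable with a $G$-invariant metric: Palais's theorem supplies compatible complete invariant metrics $d_k$ of diameter at most $1$ on each $X_k$, and $d(\mathbf x,\mathbf y)=\sum_k 2^{-k}d_k(x_k,y_k)$ is a complete $G$-invariant metric on $\prod_k X_k$, of which $X$ is a closed $G$-invariant subspace.

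The heart of the proof is the construction of $\sigma$. Using Proposition~\ref{gcubierta} and the fine $G$-homotopy equivalence of each $p^{k+1}_k$, for every $k$ I would choose a $G$-cover $\mathcal V_k$ of $X_k$ with $\mathrm{mesh}_{d_k}(\mathcal V_k)<2^{-k}$ and a $G$-$\mathcal V_k$-homotopy inverse $r_k\colon X_k\to X_{k+1}$, so that $p^{k+1}_k r_k\underset{G\text{-}\mathcal V_k}{\simeq}\mathrm{id}_{X_k}$. The iterated compositions $\tau_N=r_{N-1}\circ\cdots\circ r_n\colon X_n\to X_N$ are only approximately compatible with the bonding maps. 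To upgrade to the \emph{exact} compatibility $p^{k+1}_k\sigma^{k+1}=\sigma^k$ required for a map into the inverse limit, I would inductively straighten the approximations, invoking Theorem~\ref{Extension_homotopia} (the $G$-$\mathcal U$-homotopy extension property of $G$-ANRs) to adjust the candidate lifts so their bonding-map images exactly match the previously fixed data. This produces, at each coordinate $k$, a Cauchy sequence of $G$-maps $X_n\to X_k$ in the complete metric $d_k$, whose uniform limit gives $\sigma^k$. Continuity of the bonding maps passes the exact compatibility to the limit; setting $\sigma^k=p^n_k$ for $k\leq n$ and assembling, one obtains the $G$-map $\sigma\colon X_n\to X$, with $p_n\sigma$ being $G$-$\mathcal V_n$-homotopic to $\mathrm{id}_{X_n}$.

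With $\sigma$ in hand, I would verify the hypotheses of Theorem~\ref{Gkozlowski}. Any fine $G$-homotopy equivalence has dense image (by letting the cover shrink), so each $p^{k+1}_k$ does, whence so does each projection $p_k\colon X\to X_k$ by a standard Mittag-Leffler-type completeness argument; combined with the approximation property built into the construction, $\sigma(X_n)$ is dense in $X$. For the homotopy hypothesis, given an open cover $\mathcal U$ of $X$, the product-topology description furnishes some $m\geq n$ and an open cover $\mathcal V$ of $X_m$ with $p_m^{-1}(\mathcal V)$ refining $\mathcal U$; taking $\varphi=p_n\colon X\to X_n$, the $\mathcal V_k$-homotopies from the construction together with the summability $\sum 2^{-k}<\infty$ assemble into a $G$-$\sigma^{-1}(\mathcal U)$-homotopy from $p_n\sigma$ to $\mathrm{id}_{X_n}$. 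Theorem~\ref{Gkozlowski} then concludes that $X\in G$-ANR. For the $G$-AR addendum: each $X_k\in G$-AR is $G$-contractible, and $\sigma$ is by the conclusion of Theorem~\ref{Gkozlowski} a fine $G$-homotopy equivalence, hence it transports $G$-contractibility to $X$; since a $G$-contractible $G$-ANR is a $G$-AR, we obtain $X\in G$-AR.

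The main obstacle is the construction of $\sigma$: achieving \emph{exact} bonding compatibility from the only approximately compatible data $\tau_N$. Careful use of Theorem~\ref{Extension_homotopia} to straighten approximations coordinate by coordinate, together with an appropriate choice of the mesh sizes ensuring uniform Cauchy convergence in the complete invariant metric $d_k$, is delicate: the $\mathcal V_k$ must be fine enough to control both the convergence of the Cauchy sequence and the later assembly of the $G$-$\sigma^{-1}(\mathcal U)$-homotopy for an arbitrary open cover $\mathcal U$ of $X$.
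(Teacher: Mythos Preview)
Your plan differs from the paper's: the paper verifies the $G$-ANE property directly, extending a given $G$-map $f\colon A\to X$ coordinatewise and taking a uniform limit, whereas you try to manufacture a single $G$-map $\sigma\colon X_n\to X$ and feed it into Theorem~\ref{Gkozlowski}. That would be an elegant shortcut, but as written it has a genuine gap.

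The gap is in the compatibility $p_n\sigma=\mathrm{id}_{X_n}$ that you assert by ``setting $\sigma^k=p^n_k$ for $k\le n$''. Your construction produces each $\sigma^k$ as the uniform limit of a Cauchy sequence $p^N_k\tau_N$; with $\tau_n=\mathrm{id}$ one gets $\sigma^n$ merely \emph{close} to $\mathrm{id}_{X_n}$ (within $\sum_{N>n}2^{-N}$), not equal to it. To force the exact equality $p^{N+1}_n\tau_{N+1}=\mathrm{id}_{X_n}$ at each stage you would need to \emph{lift} the small homotopy $p^{N+1}_N r_N\tau_N\simeq\tau_N$ through $p^{N+1}_N$, i.e.\ a homotopy \emph{lifting} property for the bonding maps. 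Theorem~\ref{Extension_homotopia}, which you invoke, is a homotopy \emph{extension} theorem (extending a homotopy from a closed invariant subspace of the \emph{domain}); it says nothing about lifting through $p^{N+1}_N$. Fine $G$-homotopy equivalences are not fibrations in general, so this step does not go through.

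Without $p_n\sigma=\mathrm{id}_{X_n}$, your verification of the hypothesis of Theorem~\ref{Gkozlowski} collapses: with $\varphi=p_n$ the map $\varphi\sigma=p_n\sigma$ is a \emph{fixed} map, $G$-$\mathcal V_n$-homotopic to $\mathrm{id}_{X_n}$ for the \emph{fixed} cover $\mathcal V_n$ chosen at the outset. But $\mathcal U$ is arbitrary and, since you also claim $\sigma$ has dense image, $\sigma^{-1}(\mathcal U)$ can be made finer than any preassigned cover of $X_n$; hence for fine $\mathcal U$ there is no $G$-$\sigma^{-1}(\mathcal U)$-homotopy from the fixed map $p_n\sigma$ to $\mathrm{id}_{X_n}$. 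No other natural candidate for $\varphi$ is available (all projections factor through $p_n$), so the hypothesis cannot be met for a fixed $\sigma$. The paper sidesteps this by never building a map \emph{into} the limit from a single $X_n$; instead it works with an arbitrary extension problem $A\subset Y$, where the exact compatibility $\widetilde f_N|_A=p_Nf$ is imposed on the closed set $A$ and is preserved by Theorem~\ref{Extension_homotopia} precisely because that theorem extends homotopies rel $A$.
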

\begin{proof}
    Let $A$ be an invariant closed subset of a metrizable $G$-space $Y$, and let $f:A\to X$ be a $G$-map.   
   
    Let $\{p_i:X\to X_i\}_{i\in\mathbb{N}}$ be the inverse limit projections. 
    Since $X_{1}$ is a $G$-ANR, the $G$-map $p_{1}f:A\to X_{1}$ admits an equivariant extension $\widetilde{f}_{1}:V\to X_{1}$ defined on some invariant neighborhood $V$ of $A$ in $Y$.
    
    We will construct,  for every $n\in \mathbb{N}$,  a $G$-map $\widetilde{f}_{n}:V\to X_{n}$ such that $\widetilde{f}_{n}|_{A}=p_{n}f$. 
    
    This will be done by induction on $n$.
    Assume $\widetilde{f}_{n}$ is given. 

 For each $i\in \mathbb{N}$, let $d_{i}$ be a complete invariant compatible metric  on $X_{i}$. 
    For every $i\in\{1,\ldots, n\}$, the collection $\mathcal{V}_{i}=\{B_{d_{i}}(x,2^{-n-3})\ |\ x\in X_{i}\}$ is an open  $G$-cover of $X_{i}$ since $d_{i}$ is an invariant metric.
    Moreover, $\text{mesh}_{d_{i}} \mathcal{V}_{i}<2^{-n-1}$.
    
    Let $\mathcal{U}_{n}$ be a $G$-cover of $X_n$ that is a common refinement of $(p^n_i)^{-1}(\mathcal{V}_{i})$  for every $i\in\{1,\ldots, n\}$. 
    Thus,
    \begin{equation*}
    \operatorname{mesh}_{d_{i}} p^n_i(\mathcal{U}_{n})<2^{-n-1}.
    \end{equation*}
    
 Since $p^{n+1}_{n}:X_{n+1}\to X_{n}$ is a fine $G$-homotopy equivalence,  there exists a $G$-map $\psi_{n}:X_{n}\to X_{n+1}$ such that
    \begin{equation*}
        \psi_{n}p^{n+1}_{n}\underset{G\text{-}(p^{n+1}_{n})^{-1}(\mathcal{U}_{n})}{\simeq} id_{X_{n+1}}\quad \text{and}\quad p^{n+1}_{n}\psi_{n}\underset{G\text{-}\mathcal{U}_{n}}{\simeq}id_{X_{n}}.
    \end{equation*}
    Then,
    $$\psi_{n}p_{n}f=\psi_{n}p^{n+1}_{n}p_{n+1}f\underset{G\text{-}(p^{n+1}_n)^{-1}(\mathcal{U}_{n})}{\simeq}id_{X_{n+1}}p_{n+1}f=p_{n+1}f,$$
     which implies the existence of a $G$-$(p^{n+1}_n)^{-1}(\mathcal{U}_{n})$-homotopy $H_{t}:A\to X_{n+1}$ such that $H_{0}=\psi_{n}p_nf$ and $H_{1}=p_{n+1}f$. 
     
     Since $X_{n+1}$ is a $G$-ANR and $\psi_n\widetilde{f}_n:V\to X_{n+1}$ is a $G$-extension of $H_{0}=\psi_{n}p_nf$, 
     applying Theorem \ref{Extension_homotopia}, 
    we get a $G$-$(p^{n+1}_n)^{-1}(\mathcal{U}_{n})$-homotopy $\widetilde{H}_{t}:V\to X_{n+1}$ such that $\widetilde{H}_0=\psi_n\widetilde{f}_n$ and $\widetilde{H}_t|_A=H_t$ for all $t\in I$.
    Define $\widetilde{f}_{n+1}=\widetilde{H}_1:V\to X_{n+1}$. 
    This map satisfies $\widetilde{f}_{n+1}|_A=p_{n+1}f$.

    \medskip
    Moreover, since $\widetilde{f}_{n+1}$ and $\psi_n\widetilde{f}_n$ are 
    $G$-$(p^{n+1}_n)^{-1}(\mathcal{U}_{n})$-homotopic,  there exists a $U\in \mathcal{U}_n$ such that, for every $v\in V$,
    $$\widetilde{f}_{n+1}(v),\, \psi_n\widetilde{f}_n(v)\in (p^{n+1}_n)^{-1}(U).$$
    Consequently,
    $$p^{n+1}_{n}\widetilde{f}_{n+1}(v),\ p^{n+1}_{n}\psi_{n}\widetilde{f}_{n}(v) \in U,$$
     and therefore,
     $$p^n_ip^{n+1}_{n}\widetilde{f}_{n+1}(v),\ p^n_ip^{n+1}_{n}\psi_{n}\widetilde{f}_{n}(v) \in p^n_i(U),$$
     which implies that
      $$p^{n+1}_{i}\widetilde{f}_{n+1}(v),\; p^{n+1}_{i}\psi_{n}\widetilde{f}_{n}(v) \in p^{n}_{i}(U),$$
    for every $i\in\{1,\ldots, n\}$.

\medskip
     On the other hand, since $p^{n+1}_{n}\psi_{n}\underset{G\text{-}\mathcal{U}_{n}}{\simeq}id_{X_{n}}$, there exists $U'\in \mathcal{U}_{n}$ such that for every $x\in X_n$, 
     $$p^{n+1}_{n}\psi_n(x),\; x\in U'.$$
     Thus, for every $v\in V$,
     $$p^{n+1}_{n}\psi_{n}\widetilde{f}_{n}(v),\ \widetilde{f}_n(v)\in U',$$
     and therefore, 
     $$p^n_ip^{n+1}_{n}\psi_{n}\widetilde{f}_{n}(v),\ p^n_i\widetilde{f}_n(v)\in p^n_i(U'),$$
     which implies that
     $$p^{n+1}_{i}\psi_{n}\widetilde{f}_{n}(v),\ p^n_i\widetilde{f}_n(v)\in p^n_i(U')$$
    for every $i\in\{1,\ldots, n\}$.

    Since $\operatorname{mesh}_{d_{i}} p^n_i(\mathcal{U}_{n})<2^{-n-1}$, 
    by the triangle inequality, we get that  \medskip
    $$
    d_{i}(p^{n+1}_{i}\widetilde{f}_{n+1}(v),p^{n}_{i}\widetilde{f}_{n}(v))<2^{-n}
    $$
    for every $v\in V$ and $i\in\{1,\ldots,n\}$.
\medskip

This yields that, for a fixed $i\in\{1,\ldots,n\}$,
the functional sequence $$\big\{p^{n}_{i}\widetilde{f}_{n}(v)\big\}_{n=1}^\infty, \quad v\in V$$
is a uniform (with respect to $v\in V$) Cauchy sequence. 

Therefore, due to the completeness of the metric $d_i$, the sequence $\big\{p^{n}_{i}\widetilde{f}_{n}(v)\big\}_{n=1}^\infty$  converges uniformly, with respect to $v\in V$, to a continuous map $q_i(v)$, $v\in V$.

    \medskip 
  In other words, we can define a continuous  map $$q_{i}=\lim_{n\to \infty}p^{n}_{i}\widetilde{f}_{n}:V\to X_{i}.$$
  The equivariance of the maps $p^{n}_{i}$ and $\widetilde{f}_{n}$ easily imply the
equivariance of $q_i$.

   Observe that, for every $i\in\mathbb{N}$, 
    \begin{align*}
        p^{i+1}_{i}q_{i+1}&=p^{i+1}_{i}\left(\lim_{n\to \infty}p^{n}_{i+1}\widetilde{f}_{n} \right)\\
        &= \lim_{n\to \infty}p^{i+1}_{i}p^{n}_{i+1}\widetilde{f}_{n} \\
        &= \lim_{n\to \infty} p^n_i\widetilde{f}_n\\
        &= q_{i}.
    \end{align*}

    Thus, by the universal property of the inverse limit, there exists a unique $G$-map $h:V\to X$ such that $p_{i}h=q_{i}$ for all $i\in \mathbb{N}$. 

    Finally,  since $p^{n}_{i}\widetilde{f}_{n}|_{A}=p^{n}_{i}p_{n}f=p_{i}f$,
   it follows that $q_{i}|_{A}=p_{i}f$. 
    Hence, $p_{n}h|_{A}=q_{n}|_{A}=p_{n}f$ for every $n\in \mathbb{N}$,  implying that $h|_{A}=f$. 
    Therefore, $h$ is the desired $G$-extension of $f$.

    If each $X_i$ is a $G$-AR,  we can take $V=Y$ and $f$ will have an equivariant extension defined over whole $Y$.
\end{proof}

\end{document}